\newtheorem{mytheorem}{Theorem}
\newtheorem{theorem}{Theorem}[section]
\newtheorem{mycorollary}[mytheorem]{Corollary}
\newtheorem{corollary}[theorem]{Corollary}
\newtheorem{lemma}[theorem]{Lemma}
\newtheorem{proposition}[theorem]{Proposition}
\newtheorem*{PhiBettinumbers}{\thmref{thm:PhiBettinumbers}}
\newtheorem*{Psihomology}{\thmref{thm:Psihomology}}
\newtheorem*{echo}{Corollary~\ref{cor:echo}}
\newtheorem*{surfacecohomology}{Corollary~\ref{cor:surfacecohomology}}
\newtheorem*{n-1cohomologyPhi}{Corollary~\ref{cor:n-1cohomologyPhi}}
\theoremstyle{definition}
\theoremstyle{definition}
\newtheorem{remark}[theorem]{Remark}
\newcommand{\thmref}[1]{Theorem~\ref{#1}}
\newcommand{\propref}[1]{Proposition~\ref{#1}}
\newcommand{\lemref}[1]{Lemma~\ref{#1}}
\newcommand{\hk}{{\mathcal H}^k}
\newcommand{\til}[1]{{\widetilde{#1}}}
\def\imod#1{\allowbreak\mkern10mu({\operator@font mod}\,\,#1)}
\begin{document}

\title[The complete Dirichlet-to-Neumann map]{The complete Dirichlet-to-Neumann map for differential forms}  % format: \title[odd-numbered page header]{title}

\author{Vladimir Sharafutdinov}
\address{Sobolev Institute of Mathematics}
\email{sharaf@math.nsc.ru}
\urladdr{\href{http://www.math.nsc.ru/~sharafutdinov/}{http://www.math.nsc.ru/~sharafutdinov/}}

\author{Clayton Shonkwiler}
\address{Department of Mathematics \\ Haverford College}
\email{cshonkwi@haverford.edu}
\urladdr{\href{http://www.haverford.edu/math/cshonkwi/}{http://www.haverford.edu/math/cshonkwi/}}

\date{\today}
\keywords{Hodge theory, inverse problems, Dirichlet-to-Neumann map}
\subjclass[2000]{Primary: 58A14, 58J32; Secondary: 57R19}
\maketitle

\begin{abstract}
	The Dirichlet-to-Neumann map for differential forms on a Riemannian manifold with boundary is a generalization of the classical Dirichlet-to-Neumann map which arises in the problem of Electrical Impedance Tomography. We synthesize the two different approaches to defining this operator by giving an invariant definition of the complete Dirichlet-to-Neumann map for differential forms in terms of two linear operators $\Phi$ and $\Psi$. The pair $(\Phi, \Psi)$ is equivalent to Joshi and Lionheart's operator $\Pi$ and determines Belishev and Sharafutdinov's operator $\Lambda$. We show that the Betti numbers of the manifold are determined by $\Phi$ and that $\Psi$ determines a chain complex whose homologies are explicitly related to the cohomology groups of the manifold.
\end{abstract}

\section{Introduction} % (fold)
\label{sec:introduction}

We consider the problem of recovering the topology of a compact, oriented, smooth Riemannian manifold $(M,g)$ with boundary from the Dirichlet-to-Neumann map for differential forms. The classical Dirichlet-to-Neumann map for functions was first defined by Calder\'on \cite{Calderon}, and has been shown to recover surfaces up to conformal equivalence \cite{LassasUhlmann, Belishev1} and real-analytic manifolds of dimension $\geq 3$ up to isometry \cite{LTU}.

The classical Dirichlet-to-Neumann map was generalized to an operator on differential forms independently by Joshi and Lionheart \cite{Joshi} and Belishev and Sharafutdinov \cite{BS}. Joshi and Lionheart called their operator $\Pi$ and showed that the data $(\partial M, \Pi)$ determines the $C^\infty$-jet of the Riemannian metric at the boundary. Krupchyk, Lassas, and Uhlmann have recently extended this result to show that $(\partial M, \Pi)$ determines a real-analytic manifold up to isometry \cite{KLU}.

On the other hand, Belishev and Sharafutdinov called their Dirichlet-to-Neumann map $\Lambda$ and showed that $(\partial M, \Lambda)$ determines the cohomology groups of the manifold $M$. Shonkwiler \cite{shonk} demonstrated a connection between $\Lambda$ and invariants called Poincar\'e duality angles and showed that the cup product structure of the manifold $M$ can be partially recovered from $(\partial M, \Lambda)$.

The operators $\Pi$ and $\Lambda$ are similar, but do not appear to be equivalent. One of the advantages of Belishev and Sharafutdinov's $\Lambda$, especially for the task of recovering topological data, is that it is defined invariantly. In this paper we provide an invariant definition of Joshi and Lionheart's operator $\Pi$, which we give in terms of two auxiliary operators
\[
	\Phi: \Omega^k(\partial M) \to \Omega^{n-k-1}(\partial M) \quad \text{and} \quad \Psi: \Omega^k(\partial M) \to \Omega^{k-1}(\partial M).
\]
We can easily show that $\Lambda$ is determined by $\Phi$ and $\Psi$, so it makes sense to regard $\Pi$ as the ``complete'' Dirichlet-to-Neumann operator on differential forms.

Belishev and Sharafutdinov's proof that the Betti numbers of $M$ can be recovered from the data $(\partial M, \Lambda)$ was somewhat circuitous, as it involved determining the dimension of the image of the operator $G= \Lambda \pm d_\partial \Lambda^{-1} d_\partial$. In contrast, it is straightforward to recover the Betti numbers of $M$ from $\Phi$.

\begin{mytheorem}\label{thm:PhiBettinumbers}
	Let $\beta_k(M) = \dim H^k(M; \mathbb{R})$ be the $k$th Betti number of $M$. Then
	\[
		\beta_k(M) = \dim \ker \Phi.
	\]
\end{mytheorem}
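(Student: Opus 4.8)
The plan is to identify $\ker\Phi$ with the space $\mathcal{H}^k_N(M)$ of harmonic Neumann fields---the $k$-forms $\omega$ with $d\omega = 0$, $\delta\omega = 0$, and vanishing normal part $\mathbf{n}\omega = 0$---and then to invoke the Friedrichs piece of the Hodge--Morrey--Friedrichs decomposition, which gives a canonical isomorphism $\mathcal{H}^k_N(M) \cong H^k(M;\R)$ and hence $\dim\mathcal{H}^k_N(M) = \beta_k(M)$. Recall that $\Phi\phi$ is computed by extending $\phi$ to the solution $\omega \in \Omega^k(M)$ of the Dirichlet problem $\Delta\omega = 0$, $\mathbf{t}\omega = \phi$, $\mathbf{n}\omega = 0$ (where $\mathbf{t}\omega = i^*\omega$ is the tangential trace) and reading off a boundary trace of $d\omega$; concretely $\Phi\phi = \mathbf{t}(*d\omega)$, so that, up to the invertible boundary Hodge star, $\Phi\phi$ vanishes exactly when $\mathbf{n}(d\omega) = 0$. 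By the well-posedness of this Dirichlet problem the extension map $\phi \mapsto \omega$ is a well-defined linear injection, and it will suffice to show that it restricts to a bijection from $\ker\Phi$ onto $\mathcal{H}^k_N(M)$.

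First I would show that $\phi \in \ker\Phi$ precisely when its extension $\omega$ is a harmonic Neumann field. The condition $\Phi\phi = 0$ is equivalent to $\mathbf{n}(d\omega) = 0$, which together with the boundary condition $\mathbf{n}\omega = 0$ built into the extension says that $\omega$ satisfies the absolute boundary conditions. Applying Green's formula to $\int_M \langle \Delta\omega, \omega\rangle$ then forces $\int_M \left( |d\omega|^2 + |\delta\omega|^2 \right) = 0$, since $\Delta\omega = 0$ and the absolute boundary conditions annihilate the boundary term; hence $d\omega = 0$ and $\delta\omega = 0$, so $\omega \in \mathcal{H}^k_N(M)$. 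Conversely, any $\omega \in \mathcal{H}^k_N(M)$ is, by uniqueness, the extension of its own tangential trace $\phi = \mathbf{t}\omega$, and since $d\omega = 0$ we get $\Phi\phi = 0$.

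These two facts identify $\ker\Phi$ with $\{\mathbf{t}\omega : \omega \in \mathcal{H}^k_N(M)\}$ through the extension map, whose inverse is the tangential-trace map $\omega \mapsto \mathbf{t}\omega$; this trace map is injective because a harmonic Neumann field with $\mathbf{t}\omega = 0$ has vanishing Cauchy data $(\mathbf{t}\omega, \mathbf{n}\omega) = (0,0)$ and so vanishes identically, a harmonic field being determined by its full boundary restriction. Combining this with the Friedrichs isomorphism yields $\dim\ker\Phi = \dim\mathcal{H}^k_N(M) = \beta_k(M)$. I expect the crux of the argument to be the translation of the analytic condition $\Phi\phi = 0$ into the geometric statement $\omega \in \mathcal{H}^k_N(M)$: one must check that the absolute boundary conditions are exactly what is required to discard the boundary terms in Green's formula, so that a merely $\Delta$-harmonic extension is promoted to a genuine harmonic field, and that the extension and tangential-trace maps are honestly inverse to one another.
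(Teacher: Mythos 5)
Your proposal is correct and follows essentially the same route as the paper: identify $\ker\Phi$ with the boundary traces $i^*\hk_N(M)$ of harmonic Neumann fields via Green's formula (the absolute boundary conditions killing the boundary term so that $d\omega=\delta\omega=0$), note that harmonic fields are determined by their boundary values so the trace map is injective, and conclude with the Hodge--Morrey--Friedrichs isomorphism $\hk_N(M)\simeq H^k(M;\mathbb{R})$. No substantive differences.
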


The operator $\Psi$ turns out to be a chain map and the homology of the chain complex $(\Omega^*(\partial M), \Psi)$ is given in terms of a mixture of absolute and relative cohomology groups of $M$.

\begin{mytheorem}\label{thm:Psihomology}
	For any $0 \leq k \leq n-1$,
	\[
		H_k(\Omega^*(\partial M), \Psi)  \simeq H^{k+1}(M, \partial M; \mathbb{R}) \oplus H^k(M; \mathbb{R}).
	\]
\end{mytheorem}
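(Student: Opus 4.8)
The plan is to prove Theorem~\ref{thm:Psihomology} by first understanding what the chain map $\Psi$ actually computes in terms of boundary data of harmonic fields, and then identifying the cycles and boundaries with the appropriate cohomological objects via the Hodge--Morrey--Friedrichs decomposition. The natural guess, given the structure of these Dirichlet-to-Neumann operators, is that $\Psi$ on $\Omega^k(\partial M)$ is essentially $i^*\delta \mathcal{E}$ or a similar composition, where $\mathcal{E}$ is the harmonic extension operator solving a boundary value problem and $i^* $ is pullback to the boundary. Concretely, I expect $\Psi \varphi$ to be built from the interior solution whose boundary restriction is $\varphi$, so that the chain condition $\Psi^2 = 0$ reflects $\delta^2 = 0$ (or $d^2=0$) in the interior. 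So the first step is to write $\Psi$ explicitly in terms of such an extension operator and to record how it interacts with the interior differential $d$ and codifferential $\delta$.

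\medskip

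Second, I would set up the correct cohomological framework on the interior. The target $H^{k+1}(M,\partial M;\R)\oplus H^k(M;\R)$ strongly suggests the \emph{Hodge--Morrey--Friedrichs decomposition}, which splits the space of harmonic fields into the ``Dirichlet'' harmonic fields $\mathcal{H}^k_D$ (tangential boundary condition, computing $H^k(M;\R)$) and the ``Neumann'' harmonic fields $\mathcal{H}^k_N$ (normal boundary condition, computing $H^k(M,\partial M;\R)\cong H^k(M;\R)$ by Poincar\'e--Lefschetz duality, though here the relevant degree is $k+1$). The direct-sum structure of the answer is the tell-tale sign that the homology at level $k$ receives one summand from degree-$k$ tangential data and one from degree-$(k+1)$ normal data, and the main structural task is to match each summand to a piece of $\ker\Psi/\operatorname{im}\Psi$.

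\medskip

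Third, with $\Psi$ expressed through the extension operator, I would compute $\ker\Psi$ and $\operatorname{im}\Psi$ directly. A form $\varphi\in\Omega^k(\partial M)$ lies in $\ker\Psi$ precisely when its harmonic extension satisfies a closedness (or coclosedness) condition, and it lies in $\operatorname{im}\Psi$ when it arises as the boundary trace of $\delta$ (resp.\ $d$) of an extension one degree up. The key tool is to reinterpret these trace conditions cohomologically: using the long exact sequence of the pair $(M,\partial M)$ together with the Hodge-theoretic identification of harmonic fields, the quotient $\ker\Psi/\operatorname{im}\Psi$ should break into exactly the two pieces claimed. I would likely proceed by constructing an explicit map from $H_k(\Omega^*(\partial M),\Psi)$ to $H^{k+1}(M,\partial M;\R)\oplus H^k(M;\R)$ sending a $\Psi$-cycle to the cohomology classes represented by the tangential and normal components of its harmonic extension, then verifying injectivity and surjectivity separately.

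\medskip

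The main obstacle I anticipate is the \emph{bookkeeping of boundary conditions and degree shifts}: keeping straight which harmonic extension problem $\Psi$ solves, whether the relevant boundary trace is the pullback $i^*$ or the normal part $\iota_\nu$, and how the degree $k$ matches up with $k+1$ on the relative side. Getting the Poincar\'e--Lefschetz duality isomorphism oriented correctly, and ensuring that the exactness in the interior ($d^2=0$, $\delta^2=0$) translates precisely into $\Psi^2=0$ without sign or degree errors, is where the real care is needed. A secondary difficulty will be showing that every interior harmonic field of the appropriate type is realized by some boundary form in the kernel, i.e.\ the surjectivity onto both cohomology summands, which will require solving the relevant boundary value problems and invoking the Hodge decomposition to produce preimages.
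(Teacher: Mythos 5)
Your high-level strategy---express $\Psi$ via the harmonic extension solving \eqref{eqn:PhiPsiBVP}, compute $\ker\Psi_k$ and $\mathrm{im}\,\Psi_{k+1}$ directly, and identify the quotient using the Hodge--Morrey(--Friedrichs) decomposition---is exactly the route the paper takes (Lemmas~\ref{lem:Psikernel} and \ref{lem:Psiimage}). But as written your proposal is an outline with the decisive idea missing: you never say \emph{how} the relative group $H^{k+1}(M,\partial M;\mathbb{R})$ actually materializes inside $\ker\Psi_k$, and ``the long exact sequence of the pair together with Hodge theory'' will not produce it by itself. The actual mechanism is this: writing the harmonic extension as $\omega=\delta\xi+\kappa+d\zeta$ in the Hodge--Morrey decomposition, the condition $\Psi\varphi=i^*\delta\omega=0$ forces $d\zeta=0$ and leaves $\omega=\delta\xi+\kappa$ with $\kappa\in\hk_N(M)$ and $\delta\xi\in c\mathcal{E}^k_N(M)\cap\ker\Delta$. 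The summand $i^*\hk_N(M)=\ker\Phi_k$ accounts for $H^k(M;\mathbb{R})$. The space $c\mathcal{E}^k_N(M)\cap\ker\Delta$ then splits according to whether the coexact primitive $\xi$ can be chosen harmonic: the traces of the fields with harmonic primitive form precisely $\mathrm{im}\,\Psi_{k+1}$ (this is the content of \lemref{lem:Psiimage}, since $\delta$ of a harmonic Neumann extension is such a field), while the complementary piece $\mathcal{N}^k=\{\delta\xi\in c\mathcal{E}^k_N(M)\cap\ker\Delta:\Delta\xi\neq 0\}$ is isomorphic to $H^{k+1}(M,\partial M;\mathbb{R})$ by the Cappell--DeTurck--Gluck--Miller lemma \cite[Lemma~3]{CDGM}. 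That external input, and the observation that $\mathrm{im}\,\Psi_{k+1}$ hits exactly the harmonic-primitive piece and nothing else, are the two steps your plan does not anticipate; without them you have no concrete map onto the relative summand and no proof that the image kills only what it should.

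A smaller but genuine error: you have the boundary conditions reversed relative to the conventions used here. In this paper (following Schwarz), the Neumann fields $\hk_N(M)$ satisfy $i^*{\star}\,\omega=0$ and represent the \emph{absolute} cohomology $H^k(M;\mathbb{R})$, while the Dirichlet fields $\hk_D(M)$ satisfy $i^*\eta=0$ and represent $H^k(M,\partial M;\mathbb{R})$. Since the boundary value problem \eqref{eqn:PhiPsiBVP} defining $\Psi$ imposes $i^*{\star}\,\omega=0$, keeping this straight is not mere bookkeeping: it determines which summand lands in $\ker\Phi_k$ and which requires the harmonic-primitive analysis above.
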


This, in turn, implies that the space of $k$-forms on $\partial M$ contains an ``echo'' (detected by $\Pi$) of the $(k+1)$st relative cohomology group of $M$.

\begin{mycorollary}\label{cor:echo}
	The space $\Omega^k(\partial M)$ of $k$-forms on $\partial M$ contains a subspace isomorphic to $H^{k+1}(M, \partial M; \mathbb{R})$ which is distinguished by the Dirichlet-to-Neu\-mann operator $\Pi$. Specifically,
	\[
		(\ker \Psi_k/\mathrm{im}\, \Psi_{k+1})/\ker \Phi_k \simeq H^{k+1}(M, \partial M; \mathbb{R}).
	\]
\end{mycorollary}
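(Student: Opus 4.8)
The plan is to combine the explicit isomorphisms furnished by \thmref{thm:PhiBettinumbers} and \thmref{thm:Psihomology}, the only real subtlety being to match up the subspaces and summands appearing in the two statements. By \thmref{thm:Psihomology} the $k$-th homology of the chain complex $(\Omega^*(\partial M),\Psi)$ is
\[
	H_k(\Omega^*(\partial M),\Psi) = \ker \Psi_k/\mathrm{im}\,\Psi_{k+1} \simeq H^{k+1}(M,\partial M;\R) \oplus H^k(M;\R),
\]
while \thmref{thm:PhiBettinumbers} gives $\dim\ker\Phi_k = \beta_k(M) = \dim H^k(M;\R)$. Since the quotient in the corollary only makes sense once $\ker\Phi_k$ is realized inside the homology, the first step is to check that $\ker\Phi_k \subseteq \ker\Psi_k$, so that each element of $\ker\Phi_k$ determines a well-defined class in $\ker\Psi_k/\mathrm{im}\,\Psi_{k+1}$; the symbol $\ker\Phi_k$ in the displayed formula of the corollary is then understood as the image of this subspace in the homology.

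Next I would identify that image. Both theorems are proved by representing boundary forms as traces $i^*\omega$ of harmonic fields $\omega$ on $M$ and appealing to the Hodge--Morrey--Friedrichs decomposition, under which the summand $H^k(M;\R)$ in \thmref{thm:Psihomology} is represented by the traces of harmonic Neumann (tangentially-vanishing) fields. These are exactly the forms characterized by $\ker\Phi_k$ in the proof of \thmref{thm:PhiBettinumbers}. I would therefore show that, under the isomorphism of \thmref{thm:Psihomology}, the image of $\ker\Phi_k$ in the homology is precisely the summand $H^k(M;\R)$: it lands in that summand because its representatives are Neumann fields, and it fills the summand because the induced map $\ker\Phi_k \to H^k(M;\R)$ is injective (equivalently $\ker\Phi_k\cap \mathrm{im}\,\Psi_{k+1}=0$) and the dimensions agree by \thmref{thm:PhiBettinumbers}.

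Granting this, the corollary is immediate: quotienting the homology by the $H^k(M;\R)$ summand leaves
\[
	(\ker\Psi_k/\mathrm{im}\,\Psi_{k+1})/\ker\Phi_k \simeq \bigl(H^{k+1}(M,\partial M;\R)\oplus H^k(M;\R)\bigr)/H^k(M;\R) \simeq H^{k+1}(M,\partial M;\R),
\]
and the resulting subspace of $\Omega^k(\partial M)$ isomorphic to $H^{k+1}(M,\partial M;\R)$ is recovered by $\Pi$ precisely because $\Pi$ determines both $\Phi$ and $\Psi$.

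I expect the main obstacle to be the middle step: verifying that the image of $\ker\Phi_k$ is exactly the $H^k(M;\R)$ summand rather than some subspace that projects nontrivially onto the relative-cohomology factor. This forces one to track the explicit harmonic-field representatives through the proofs of both theorems and to confirm that the direct-sum decomposition in \thmref{thm:Psihomology} is the canonical Hodge--Morrey--Friedrichs splitting, so that the Neumann-field traces cut out by $\Phi$ align with the absolute-cohomology summand. Once that compatibility is in hand, the injectivity and the dimension count reduce to what the two theorems already provide.
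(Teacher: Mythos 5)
Your proposal is correct and follows essentially the same route as the paper: there the corollary is read off from Lemmas~\ref{lem:Psikernel} and~\ref{lem:Psiimage}, which give the three-fold direct-sum decomposition of $\ker\Psi_k$ with $\ker\Phi_k$ as one summand, $\mathrm{im}\,\Psi_{k+1}$ as the second, and a space isomorphic to $H^{k+1}(M,\partial M;\mathbb{R})$ as the third---precisely the disjointness and identification facts you isolate as the key step. Your closing worry about whether the image of $\ker\Phi_k$ aligns exactly with the $H^k(M;\mathbb{R})$ summand is moot for the stated (purely vector-space) isomorphism: once $\ker\Phi_k\subseteq\ker\Psi_k$ and $\ker\Phi_k\cap\mathrm{im}\,\Psi_{k+1}=0$ are known, the dimension count already finishes the argument.
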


When $n = 2$ and $k = 0$, \thmref{thm:PhiBettinumbers} and Corollary~\ref{cor:echo} imply that all the cohomology groups of a surface are contained in $\Omega^0(\partial M)$.
	
\begin{mycorollary}\label{cor:surfacecohomology}
	All of the cohomology groups of a surface $M$ with boundary can be realized inside the space of smooth functions on $\partial M$, where they can be recovered by the Dirichlet-to-Neumann operator $\Pi$.
\end{mycorollary}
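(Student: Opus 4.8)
The plan is to account for each cohomology group of the surface $M$ separately and to exhibit each one inside the space $\Omega^0(\partial M)$ of smooth functions on $\partial M$, using only the two results already established together with classical duality. Since $M$ is a compact surface with nonempty boundary, it deformation retracts onto a wedge of circles, so $H^2(M;\mathbb{R}) = 0$ and the only potentially nontrivial cohomology groups are $H^0(M;\mathbb{R})$ and $H^1(M;\mathbb{R})$. For $H^0$ I would read off \thmref{thm:PhiBettinumbers} in the case $k = 0$: here $\Phi_0 \colon \Omega^0(\partial M) \to \Omega^{n-1}(\partial M) = \Omega^1(\partial M)$, and the theorem gives $\dim \ker \Phi_0 = \beta_0(M) = \dim H^0(M;\mathbb{R})$, so that $\ker \Phi_0$ is already a subspace of $\Omega^0(\partial M)$ isomorphic to $H^0(M;\mathbb{R})$.

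For $H^1$ I would invoke Corollary~\ref{cor:echo} with $k = 0$, which identifies the subquotient $(\ker \Psi_0/\mathrm{im}\, \Psi_1)/\ker \Phi_0$ with $H^1(M, \partial M; \mathbb{R})$. Since $\Psi_0$ maps into $\Omega^{-1}(\partial M) = 0$ we have $\ker \Psi_0 = \Omega^0(\partial M)$, and $\mathrm{im}\, \Psi_1 \subseteq \Omega^0(\partial M)$, so this subquotient genuinely lives inside the smooth functions on $\partial M$. The remaining task is to recognize the relative group as the absolute group: Poincar\'e--Lefschetz duality gives $H^1(M, \partial M;\mathbb{R}) \simeq H_{n-1}(M;\mathbb{R}) = H_1(M;\mathbb{R})$, and the universal coefficient theorem over $\mathbb{R}$ gives $H_1(M;\mathbb{R}) \simeq H^1(M;\mathbb{R})$. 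Chaining these isomorphisms realizes $H^1(M;\mathbb{R})$ as a subquotient of $\Omega^0(\partial M)$.

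Finally, because the pair $(\Phi, \Psi)$ is equivalent to $\Pi$, the operators $\Phi_0$ and $\Psi_1$, and hence all the subspaces and subquotients above, are determined by $\Pi$; this gives the ``recovered by $\Pi$'' clause. I expect the only genuinely delicate point to be the duality bookkeeping in the $H^1$ step, namely confirming that the composite relative-to-absolute identification is a legitimate vector space isomorphism of the correct dimension in the presence of boundary; once that is in hand, the statement is a direct specialization of \thmref{thm:PhiBettinumbers} and Corollary~\ref{cor:echo} to $n = 2$ and $k = 0$.
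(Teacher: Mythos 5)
Your argument is correct and is essentially the paper's own: specialize \thmref{thm:PhiBettinumbers} and Corollary~\ref{cor:echo} to $n=2$, $k=0$ to exhibit $H^0(M;\mathbb{R})$ and $H^1(M,\partial M;\mathbb{R})$ inside $\Omega^0(\partial M)$, then apply Poincar\'e--Lefschetz duality to identify $H^1(M,\partial M;\mathbb{R})\simeq H^1(M;\mathbb{R})$ and observe that $H^2(M;\mathbb{R})$ vanishes. The duality step you flag as delicate is standard for a compact oriented manifold with boundary and needs no further justification.
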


Since $\Psi$ is a chain map, it is natural to try to define associated cochain maps and compute their cohomologies. In this spirit, we define $\til{\Psi} = \pm \star_\partial \Psi \star_\partial$ and show that it is the adjoint of $\Psi$. Not surprisingly,
\[
	H^k(\Omega^*(\partial M), \til{\Psi}) \simeq H_{n-k-1}(\Omega^*(\partial M), \Psi).
\]

Finally, we define another cochain map $\Theta$ with the same cohomology as $\til{\Psi}$. It turns out that $\Theta = \pm d_\partial \Phi^2$, so the cohomology of $\til{\Psi}$ (and hence the homology of $\Psi$) is completely determined by the operator $\Phi$. With this in mind, restating Corollary~\ref{cor:echo} in terms of $\Phi$ and specializing to the case $k=0$ yields the following:

\begin{mycorollary}\label{cor:n-1cohomologyPhi}
	A copy of the cohomology group $H^{n-1}(M; \mathbb{R})$ is distinguished by the operator $\Phi$ inside $\Omega^0(\partial M)$, the space of smooth functions on $\partial M$. Specifically,
	\[
		\ker(d_\partial \Phi^2)/\ker \Phi \simeq H^{n-1}(M; \mathbb{R}).
	\]
\end{mycorollary}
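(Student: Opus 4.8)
The plan is to read the corollary off the cohomology of the cochain complex $(\Omega^*(\partial M), \Theta)$, specialize to degree zero, and then use \thmref{thm:PhiBettinumbers} to quotient out the unwanted summand. Since the statement concerns $\Omega^0(\partial M)$, all operators $\Phi$, $\Phi^2$, $d_\partial\Phi^2$ are understood as their restrictions to functions; note $\Phi^2$ lands back in $\Omega^0(\partial M)$ because $\Phi$ sends $\Omega^0 \to \Omega^{n-1}$ and then $\Omega^{n-1} \to \Omega^0$.

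First I would compute $H^k(\Omega^*(\partial M), \Theta)$ in every degree. Because $\Theta = \pm d_\partial\Phi^2$ has the same cohomology as $\til{\Psi}$, I can chain the duality $H^k(\Omega^*(\partial M), \til{\Psi}) \simeq H_{n-k-1}(\Omega^*(\partial M), \Psi)$ with \thmref{thm:Psihomology} (applied in degree $n-k-1$) to obtain
\[
	H^k(\Omega^*(\partial M), \Theta) \simeq H^{n-k}(M, \partial M; \mathbb{R}) \oplus H^{n-k-1}(M; \mathbb{R}).
\]
Poincaré--Lefschetz duality on the compact oriented manifold $M$ identifies $H^{n-k}(M, \partial M; \mathbb{R}) \simeq H_k(M; \mathbb{R})$, and since we work over the field $\mathbb{R}$ the universal coefficient theorem gives $H_k(M;\mathbb{R}) \simeq H^k(M;\mathbb{R})$. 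Hence
\[
	H^k(\Omega^*(\partial M), \Theta) \simeq H^k(M; \mathbb{R}) \oplus H^{n-k-1}(M; \mathbb{R}).
\]

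Next I would specialize to $k = 0$. As $\Omega^{-1}(\partial M) = 0$, there are no $\Theta$-coboundaries in degree zero, so $H^0(\Omega^*(\partial M), \Theta)$ is exactly the space of $\Theta$-cocycles, which equals $\ker(d_\partial\Phi^2)$ inside $\Omega^0(\partial M)$ (the sign in $\Theta = \pm d_\partial\Phi^2$ does not affect the kernel). The previous display thus becomes
\[
	\ker(d_\partial\Phi^2) \simeq H^0(M; \mathbb{R}) \oplus H^{n-1}(M; \mathbb{R}).
\]
Finally I would remove the $H^0$ summand using $\ker\Phi$: if $\Phi\omega = 0$ then $\Phi^2\omega = 0$ and so $d_\partial\Phi^2\omega = 0$, giving $\ker\Phi \subseteq \ker(d_\partial\Phi^2)$; and by \thmref{thm:PhiBettinumbers} we have $\dim\ker\Phi = \beta_0(M) = \dim H^0(M;\mathbb{R})$, matching the dimension of the first summand. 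A dimension count then yields $\dim\bigl(\ker(d_\partial\Phi^2)/\ker\Phi\bigr) = \dim H^{n-1}(M;\mathbb{R})$, and since finite-dimensional real vector spaces are classified by dimension, the quotient is isomorphic to $H^{n-1}(M;\mathbb{R})$.

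The main obstacle I anticipate is bookkeeping: tracking the degrees correctly through the adjoint isomorphism for $\til{\Psi}$, \thmref{thm:Psihomology}, and Poincaré--Lefschetz duality simultaneously, and confirming that $\ker\Phi$ both sits inside $\ker(d_\partial\Phi^2)$ and accounts precisely for the $H^0(M)$ factor. Passing to dimensions at the last step is what lets me conclude the isomorphism without producing a natural splitting of the direct sum, which would otherwise be the delicate point.
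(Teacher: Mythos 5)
Your proposal is correct and follows essentially the same route as the paper, which presents this corollary as the $k=0$ case of Corollary~\ref{cor:echo} restated via the fact that $\Theta = \pm d_\partial\Phi^2$ computes the homology of $\Psi$ (Proposition~\ref{prop:Thetacohomology}, whose proof the paper also omits and which you likewise take as given), combined with \thmref{thm:PhiBettinumbers} and Poincar\'e--Lefschetz duality. Your degree bookkeeping is right, and your formula $H^k(\Omega^*(\partial M),\Theta)\simeq H^k(M;\mathbb{R})\oplus H^{n-k-1}(M;\mathbb{R})$ agrees with the paper's $H^{k+1}(M,\partial M;\mathbb{R})\oplus H^k(M;\mathbb{R})$ after dualizing the first summand.
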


The above results all suggest that the operator $\Pi$ (and, in particular, $\Phi$) encodes more information about the topology of $M$ than does the operator $\Lambda$. Thus far nobody has been able to use $\Lambda$ to recover the cohomology ring structure on $M$, but perhaps this will be easier to recover from the operator $\Pi$. Another interesting question relates to the linearized inverse problem of recovering the metric: can the results of \cite{Sh} be strengthened if the data $\Lambda$ are replaced with the richer data $(\Phi,\Psi)$?

% section introduction (end)

\section{The operators $\Phi$ and $\Psi$} % (fold)
\label{sec:preliminaries}
Throughout this paper, $(M,g)$ will be a smooth, compact, oriented Riemannian manifold of dimension $n \geq 2$ with nonempty boundary. The term ``smooth'' is used as a synonym for ``$C^\infty$-smooth''. Let $i: \partial M \hookrightarrow M$ be the identical embedding and let $\Omega(M) = \bigoplus_{k=0}^n \Omega^k(M)$ be the graded algebra of smooth differential forms on $M$. We use the standard operators $d, \delta, \Delta$, and $\star$ on $\Omega(M)$, as well as their analogues $d_\partial, \delta_\partial, \Delta_\partial$, and $\star_\partial$ on $\Omega(\partial M)$.

Joshi and Lionheart defined their Dirichlet-to-Neumann operator
\[
	\Pi: \Omega(M)|_{\partial M} \to \Omega(M)|_{\partial M}
\]
as
\[
	\Pi \chi := \left. \frac{\partial \omega}{\partial \nu} \right|_{\partial M},
\]
where $\nu$ is the unit outward normal vector at the boundary and $\omega$ is the solution to the boundary value problem
\[
	\begin{cases} \Delta \omega = 0 \\ \omega|_{\partial M} = \chi. \end{cases}
\]
This boundary value problem has a unique solution for every $\chi \in \Omega(M)|_{\partial M}$ \cite[Theorem~3.4.1]{Schwarz}.

When applied to forms, the meaning of the normal derivative $\partial/\partial \nu$ needs to be specified. Instead, we prefer to give an equivalent definition of $\Pi$ in invariant terms. To do so, note that the restriction $\omega|_{\partial M}$ is determined by two boundary forms, $i^*\omega$ and $i^*\!\star \omega$. Likewise, the data $\partial \omega/\partial \nu|_{\partial M}$ are equivalent to the two boundary forms $i^*\!\star d\omega$ and $i^*\delta \omega$. Hence, we will define the operator
\[
	\Pi: \Omega^k(\partial M) \times \Omega^{n-k}(\partial M) \to \Omega^{n-k-1}(\partial M) \times \Omega^{k-1}(\partial M)
\]
by
\begin{equation}\label{eqn:Pidef}
	\Pi \begin{pmatrix} \varphi \\ \psi \end{pmatrix} = \begin{pmatrix} i^*\!\star d\omega \\ i^*\delta \omega \end{pmatrix}
\end{equation}
where $\omega \in \Omega^k(M)$ is the solution to the boundary value problem
\begin{equation}\label{eqn:PiBVP}
	\begin{cases} \Delta \omega = 0 & \\ i^*\omega = \varphi , & i^*\!\star \omega = \psi.\end{cases}
\end{equation}

Since $\Pi$ sends pairs of forms to pairs of forms, it is somewhat cumbersome to work with in practice. Instead of using it directly, we find a pair of operators $(\Phi, \Psi)$ which is equivalent to $\Pi$. Define the linear operators
\[
	\Phi: \Omega^k(\partial M) \to \Omega^{n-k-1}(\partial M) \quad \text{and} \quad \Psi: \Omega^k(\partial M) \to \Omega^{k-1}(\partial M)
\]
by the equalities
\begin{equation}\label{eqn:PhiPsidef}
	\Phi \varphi = i^*\!\star d \omega \quad \text{and} \quad \Psi \varphi = i^*\delta \omega.
\end{equation}
Here $\omega \in \Omega^k(M)$ is the solution to the boundary value problem
\begin{equation}\label{eqn:PhiPsiBVP}
	\begin{cases} \Delta \omega = 0 & \\ i^*\omega = \varphi, & i^*\!\star \omega = 0. \end{cases}
\end{equation}

Now it is straightforward to express $\Pi$ in terms of $\Phi$ and $\Psi$. We write $\Pi$ as the matrix
\[
	\Pi = \begin{pmatrix} \Pi_{11} & \Pi_{12} \\ \Pi_{21} & \Pi_{22} \end{pmatrix}.
\]

Then, comparing \eqref{eqn:Pidef} and \eqref{eqn:PhiPsidef},
\[
	\Pi_{11} = \Phi, \quad \Pi_{21} = \Psi.
\]
From \eqref{eqn:Pidef} and \eqref{eqn:PiBVP}, the operators $\Pi_{12}$ and $\Pi_{22}$ are given by
\[
	\Pi_{12} \psi = i^*\!\star d \varepsilon \quad \text{and} \quad \Pi_{22}\psi = i^*\delta \varepsilon,
\]
where $\varepsilon$ solves the boundary value problem
\[
	\begin{cases} \Delta \varepsilon = 0 & \\ i^* \varepsilon = 0, & i^*\!\star \varepsilon = \psi. \end{cases}
\]

If $\varepsilon \in \Omega^{k}(M)$ is the solution to this boundary value problem for $\psi \in \Omega^{n-k}(\partial M)$, then the form $\omega = \star \varepsilon$ solves the problem
\[
	\begin{cases} \Delta \omega = 0 & \\ i^*\omega = \psi, & i^*\!\star \omega = 0. \end{cases}
\]
Comparing this to \eqref{eqn:PhiPsiBVP}, we see that
\begin{equation}\label{eqn:PhiPsiomega}
	\Phi \psi = i^*\!\star d\omega \quad \text{and} \quad \Psi \psi = i^*\delta \omega.
\end{equation}
Since
\[
	i^*\!\star d\omega = (-1)^{n(n-k)+1}i^*\delta \varepsilon \quad \text{and} \quad i^*\delta \omega = (-1)^{k+1} i^*\!\star d \varepsilon,
\]
\eqref{eqn:Pidef} and \eqref{eqn:PhiPsiomega} imply that
\[
	\Pi_{12} = (-1)^{n(n-k) + 1} \Psi \quad \text{and} \quad \Pi_{22} = (-1)^{k+1} \Phi \quad \text{on} \quad \Omega^{n-k}(\partial M).
\]

Therefore, the operator $\Pi$ can be expressed in terms of $\Phi$ and $\Psi$ as
\begin{equation}\label{eqn:PiPhiPsi}
	\Pi = \begin{pmatrix} \Phi & (-1)^{n(n-k)+1} \Psi \\ \Psi & (-1)^{k+1}\Phi \end{pmatrix} \quad \text{on} \quad \Omega^{k}(\partial M) \times \Omega^{n-k}(\partial M).
\end{equation}

Belishev and Sharafutdinov's version of the Dirichlet-to-Neumann map is the operator
\[
	\Lambda: \Omega^k(\partial M) \to \Omega^{n-k-1}(\partial M)
\]
given by
\[
	\Lambda \varphi = i^*\!\star d\omega,
\]
where $\omega \in \Omega^k(M)$ is a solution to the boundary value problem
\begin{equation}\label{eqn:LambdaBVP}
	\begin{cases} \Delta \omega = 0 & \\ i^*\omega = \varphi , & i^*\delta \omega = 0. \end{cases}
\end{equation}

We can now express the operator $\Lambda$ in terms of $\Phi$ and $\Psi$. Given $\varphi \in \Omega^k(\partial M)$, let $\omega \in \Omega^k(M)$ solve the boundary value problem \eqref{eqn:LambdaBVP} and set $\psi = i^*\!\star \omega$. Then $\omega$ solves the boundary value problem \eqref{eqn:PiBVP}, so we have that
\[
	\Pi \begin{pmatrix} \varphi \\ \psi \end{pmatrix} = \begin{pmatrix} i^*\!\star d\omega \\ i^*\delta \omega \end{pmatrix} = \begin{pmatrix} \Lambda \varphi \\ 0 \end{pmatrix}.
\]
With the help of \eqref{eqn:PiPhiPsi} we can rewrite this equation as the system
\begin{align*}
	\Phi \varphi + (-1)^{n(n-k)+1}\Psi \psi & = \Lambda \varphi \\
	\Psi \varphi + (-1)^{k+1}\Phi \psi & = 0.
\end{align*}
Eliminating $\psi$ from the system yields the expression
\begin{equation}
	\label{eqn:LambdaPhiPsi} \Lambda = \Phi + (-1)^{n(n-k)+k+1}\Psi \Phi^{-1}\Psi \quad \text{on} \quad \Omega^k(\partial M).
\end{equation}
The fact that the operator $\Psi \Phi^{-1}\Psi$ is well-defined follows from Corollary~\ref{cor:domainrangePhiPsi}, stated below.

We take this opportunity to record some useful relations involving $\Phi$ and $\Psi$:

\begin{lemma}\label{lem:PhiPsiproperties}
	The operators $\Phi$ and $\Psi$ satisfy the following relations:
	\begin{align}
		\Phi \Psi & = (-1)^k d_\partial \Phi \quad \text{on} \quad \Omega^k(\partial M), \label{eqn:PhiPsi} \\
		\Psi^2 & = 0 \label{eqn:Psi2} \\
		\Psi \Phi & = (-1)^{k+1} \Phi d_\partial \quad \text{on} \quad \Omega^k(\partial M), \label{eqn:PsiPhi} \\
		\Phi^2 & = (-1)^{kn}(d_\partial \Psi + \Psi d_\partial) \quad \text{on} \quad \Omega^k(\partial M) \label{eqn:Phi2}
	\end{align}
\end{lemma}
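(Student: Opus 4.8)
The plan is to compute each composition by exhibiting, for the relevant intermediate boundary form, an \emph{explicit} harmonic representative and then invoking the uniqueness of the solution of \eqref{eqn:PhiPsiBVP} (\cite[Theorem~3.4.1]{Schwarz}) to conclude that this representative is precisely the form used to evaluate the outer operator. Throughout, let $\omega \in \Omega^k(M)$ be the solution of \eqref{eqn:PhiPsiBVP} attached to $\varphi$, so that $\Phi\varphi = i^*\!\star d\omega$ and $\Psi\varphi = i^*\delta\omega$, and I will freely use the elementary rules $i^* d = d_\partial i^*$, $\star\star = \pm 1$, $\delta = \pm\star d\star$, $\delta\star = \pm\star d$, together with the fact that $d$, $\delta$, and $\star$ all commute with $\Delta$.

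For the two relations with $\Psi$ on the inside, \eqref{eqn:PhiPsi} and \eqref{eqn:Psi2}, I would observe that $\delta\omega$ is already harmonic and carries exactly the right boundary data to be the extension of $\Psi\varphi$: by definition $i^*\delta\omega = \Psi\varphi$, while $i^*\!\star\delta\omega = \pm i^* d\star\omega = \pm d_\partial\, i^*\!\star\omega = 0$ since $i^*\!\star\omega = 0$. By uniqueness, $\delta\omega$ is the form computing $\Phi(\Psi\varphi)$ and $\Psi(\Psi\varphi)$. Then \eqref{eqn:Psi2} is immediate from $i^*\delta(\delta\omega) = 0$, and \eqref{eqn:PhiPsi} follows by writing $\Phi\Psi\varphi = i^*\!\star d\delta\omega$, replacing $d\delta\omega = -\delta d\omega$ by harmonicity and $\star\delta = \pm d\star$ to turn this into $\pm d_\partial\, i^*\!\star d\omega = \pm d_\partial \Phi\varphi$; bookkeeping the Hodge-star signs yields the factor $(-1)^k$.

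The relations \eqref{eqn:PsiPhi} and \eqref{eqn:Phi2}, with $\Phi$ on the inside, are the crux. The natural candidate for the harmonic extension of $\Phi\varphi = i^*\!\star d\omega$ is $\star d\omega$, which is harmonic with $i^*(\star d\omega) = \Phi\varphi$; however its \emph{second} datum $i^*\!\star(\star d\omega) = \pm d_\partial\varphi$ need not vanish, so $\star d\omega$ is \emph{not} the solution of \eqref{eqn:PhiPsiBVP} for $\Phi\varphi$. I would repair this by subtracting a correction $\rho$ solving $\Delta\rho = 0$, $i^*\rho = 0$, $i^*\!\star\rho = \pm d_\partial\varphi$ --- which is exactly the auxiliary boundary value problem defining the operators $\Pi_{12}$ and $\Pi_{22}$ analyzed above. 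Then $\zeta := \star d\omega - \rho$ is harmonic with $i^*\zeta = \Phi\varphi$ and $i^*\!\star\zeta = 0$, so by uniqueness $\zeta$ is the form computing $\Phi(\Phi\varphi)$ and $\Psi(\Phi\varphi)$.

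With $\zeta$ in hand I would evaluate $\Phi^2\varphi = i^*\!\star d\zeta$ and $\Psi\Phi\varphi = i^*\delta\zeta$ termwise. The $\star d\omega$ contributions collapse using harmonicity and $d^2 = 0$: here $i^*\delta(\star d\omega) = \pm i^*\!\star d(d\omega) = 0$, while $i^*\!\star d(\star d\omega) = \pm i^*\delta d\omega = \mp i^* d\delta\omega = \mp d_\partial \Psi\varphi$. The $\rho$ contributions are precisely the quantities $i^*\delta\varepsilon$ and $i^*\!\star d\varepsilon$ already computed for the $\varepsilon$-problem in the derivation of $\Pi_{12}$ and $\Pi_{22}$, now with the degree shifted by the substitution $\psi = \pm d_\partial\varphi \in \Omega^{k+1}(\partial M)$; these were found to be $\pm\Phi\psi$ and $\pm\Psi\psi$, hence contribute $\pm\Phi d_\partial\varphi$ and $\pm\Psi d_\partial\varphi$. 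Assembling the pieces gives \eqref{eqn:PsiPhi} and \eqref{eqn:Phi2}. The only genuine difficulty is the careful tracking of the many Hodge-star and codifferential signs; I expect the sign bookkeeping in \eqref{eqn:Phi2} to be the most error-prone step, and I would tame it by reducing every sign to the parities of $n$, $k$, and $nk$ modulo $2$.
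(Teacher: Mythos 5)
Your proposal is correct and follows essentially the same route as the paper: for \eqref{eqn:PhiPsi} and \eqref{eqn:Psi2} the paper also uses $\xi=\delta\omega$ as the harmonic extension of $\Psi\varphi$ with vanishing normal part, and for \eqref{eqn:PsiPhi} and \eqref{eqn:Phi2} it uses $\eta=\star d\omega-\star\varepsilon$, where $\varepsilon$ is the harmonic extension of $d_\partial\varphi$ with $i^*\!\star\varepsilon=0$ --- your correction form $\rho$ is exactly $\star\varepsilon$ up to sign, and your appeal to the $\Pi_{12},\Pi_{22}$ identities is just the $\star$-duality the paper builds into its choice of $\varepsilon$. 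The only work left implicit in your write-up is the sign bookkeeping you flag, which the paper carries out via $\delta\eta=(-1)^{k+1}\star d\varepsilon$ and $\star d\eta=(-1)^{kn+1}(\delta d\omega-\delta\varepsilon)$.
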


\begin{proof}
	Given $\varphi \in \Omega^k(\partial M)$, let $\omega \in \Omega^k(M)$ solve the boundary value problem \eqref{eqn:PhiPsiBVP}. Then
	\begin{equation}\label{eqn:PhiPsidef1}
		\Phi \varphi = i^*\!\star \omega, \quad \Psi \varphi = i^*\delta \omega.
	\end{equation}
	Letting $\xi = \delta \omega$, we certainly have $\Delta \xi = 0$. Pulling $\xi$ and $\star \xi$ back to the boundary yields
	\begin{align*}
		i^*\xi & = i^*\delta \omega = \Psi \varphi \\
		i^*\!\star \xi & = i^*\!\star \delta \omega = \pm i^*d \star \omega = \pm d_\partial i^*\!\star \omega = 0.
	\end{align*}
	
	Therefore, $\xi$ solves the boundary value problem
	\[
		\begin{cases} \Delta \xi = 0 & \\ i^*\xi = \Psi \varphi, & i^*\!\star \xi = 0, \end{cases}
	\]
	and so
	\begin{equation}\label{eqn:PhiPsiPsi2}
		\Phi \Psi \varphi = i^*\!\star d\xi \quad \text{and} \quad \Psi^2 \varphi = i^*\delta \xi.
	\end{equation}
	Since $\Delta \omega = 0$, it follows that $d\delta \omega = -\delta d \omega$, which we use to see that
	\begin{align*}
		i^*\!\star d\xi & = i^*\!\star d\delta \omega = -i^*\!\star \delta d\omega = (-1)^k i^*d \star d\omega = (-1)^k d_\partial i^* \!\star d\omega, \\
		i^*\delta \xi & = i^*\delta \delta \omega = 0.
	\end{align*}
	
	Comparing this with \eqref{eqn:PhiPsiPsi2}, we obtain
	\[
		\Phi\Psi \varphi = (-1)^k d_\partial i^*\!\star d\omega \quad \text{and} \quad \Psi^2 \varphi = 0.
	\]
	With the help of \eqref{eqn:PhiPsidef1}, this gives \eqref{eqn:PhiPsi} and \eqref{eqn:Psi2}.
	
	Turning to \eqref{eqn:PsiPhi}, we again let $\omega \in \Omega^k(M)$ solve \eqref{eqn:PhiPsiBVP} for a form $\varphi \in \Omega^k(\partial M)$. Let $\varepsilon \in \Omega^{k+1}(M)$ be a solution to the problem
	\[
		\begin{cases} \Delta \varepsilon = 0 & \\ i^*\varepsilon = d \varphi, & i^*\!\star \varepsilon = 0. \end{cases}
	\]
	Then
	\begin{equation}\label{eqn:PhidPsid}
		\Phi d_\partial \varphi = i^*\!\star d \varepsilon, \quad \Psi d_\partial \varphi = i^*\delta \varepsilon.
	\end{equation}
	
	Define $\eta \in \Omega^{n-k-1}(M)$ by
	\begin{equation}\label{eqn:etadef}
		\eta = \star d \omega - \star \varepsilon.
	\end{equation}
	Clearly, $\Delta \eta = 0$. Moreover,
	\[
		\star \eta = \star \star (d\omega - \varepsilon) = \pm (d\omega - \varepsilon),
	\]
	so
	\[
		i^*\!\star \eta = \pm i^*(d\omega - \varepsilon) = \pm (d \varphi - d \varphi) = 0.
	\]
	Also,
	\[
		i^* \eta = i^*\!\star d\omega - i^*\!\star \varepsilon = \Phi \varphi,
	\]
	since $i^*\!\star \varepsilon = 0$.
	
	Therefore, $\eta$ solves the boundary value problem
	\[
		\begin{cases} \Delta \eta = 0 & \\ i^* \eta = \Psi \varphi, & i^*\!\star \eta = 0. \end{cases}
	\]
	Hence,
	\begin{equation}\label{eqn:Phi2PsiPhi}
		\Phi^2 \varphi= i^*\!\star d\eta \quad \text{and} \quad \Phi \Psi \varphi = i^*\delta \eta.
	\end{equation}
	Using \eqref{eqn:etadef} we see that
	\[
		\delta \eta = \delta \star d\omega - \delta \star \varepsilon = \pm \star dd\omega - \delta \star \varepsilon = (-1)^{k+1} \star d \varepsilon.
	\]
	Thus,
	\[
		i^*\delta \eta = (-1)^{k+1} i^*\!\star d \epsilon,
	\]
	which, along with \eqref{eqn:PhidPsid} and \eqref{eqn:Phi2PsiPhi}, yields
	\[
		\Psi \Phi \varphi = (-1)^{k+1} \Phi d_\partial \varphi,
	\]
	proving \eqref{eqn:PsiPhi}.
	
	Finally, \eqref{eqn:Phi2} is proved along the same lines. From \eqref{eqn:etadef} we have
	\[
		\star d \eta = \star\, d \star (d\omega - \varepsilon) = (-1)^{kn+1} (\delta d\omega - \delta \varepsilon).
	\]
	Again making use of the fact that $\delta d \omega = -d \delta \omega$, this implies that
	\[
		i^*\!\star d\omega = (-1)^{kn+1}\left(i^*\delta d\omega - i^*\delta \varepsilon\right) = (-1)^{kn} \left( d_\partial i^*\delta \omega + i^*\delta \varepsilon \right).
	\]
	In turn, we can use \eqref{eqn:PhiPsidef1} and \eqref{eqn:PhidPsid} to rewrite the above formula as
	\[
		i^*\!\star d\eta = (-1)^{kn} \left(d_\partial \Psi \varphi + \Psi d_\partial \varphi \right).
	\]
	Comparing with \eqref{eqn:Phi2PsiPhi}, this produces the desired relation \eqref{eqn:Phi2}.
\end{proof}

\begin{remark}\label{rem:Lambdaproperties}
	The key properties of the operator $\Lambda$ are expressed by the equalities
	\[
		\Lambda d_\partial = 0, \quad d_\partial \Lambda = 0, \quad \text{and} \quad \Lambda^2 = 0.
	\]
	It is straightforward to check that these equalities follow from \eqref{eqn:LambdaPhiPsi} and \lemref{lem:PhiPsiproperties}.
\end{remark}

% section preliminaries (end)

\section{Recovering the Betti numbers of $M$ from $\Phi$} % (fold)
\label{sec:betti_numbers}
Belishev and Sharafutdinov showed that the Betti numbers of the manifold $M$,
\[
	\beta_k(M) = \dim H^k(M; \mathbb{R}),
\]
can be recovered from the data $(\partial M, \Lambda)$. The proof of this fact is somewhat indirect, involving the auxiliary operator
\begin{equation}\label{eqn:Gdef}
	G = \Lambda + (-1)^{kn+k+n} d_\partial \Lambda^{-1} d_\partial : \Omega^k(\partial M) \to \Omega^{n-k-1}(\partial M).
\end{equation}

In contrast, it is much more straightforward to recover the Betti numbers of $M$ from the operator $\Phi$.

\begin{PhiBettinumbers}
	Let $\Phi_k: \Omega^k(\partial M) \to \Omega^{n-k-1}(\partial M)$ be the restriction of $\Phi$ to $\Omega^k(\partial M)$. Then
	\[
		\beta_k(M) = \dim \ker \Phi_k.
	\]
\end{PhiBettinumbers}

The Hodge--Morrey--Friedrichs decomposition theorem \cite[Section~2.4]{Schwarz} implies that
\[
	H^k(M; \mathbb{R}) \simeq \hk_N(M),
\]
where
\[
	\hk_N(M) := \{\omega \in \Omega^k(M) : d\omega = 0, \delta \omega = 0, i^*\!\star \omega = 0\}
\]
is the space of harmonic Neumann fields. Since harmonic forms are uniquely determined by their boundary values, $\hk_N(M) \simeq i^*\hk_N(M)$, so \thmref{thm:PhiBettinumbers} is an immediate consequence of the following lemma.

\begin{lemma}\label{lem:kerPhi}
	The kernel of the operator $\Phi_k: \Omega^k(\partial M) \to \Omega^{n-k-1}(\partial M)$ consists of the boundary traces of harmonic Neumann fields; i.e.,
	\[
		\ker \Phi_k = i^*\hk_N(M).
	\]
	The image of $\Phi_k$ coincides with the subspace $(i^*\hk_N(M))^\perp \subset \Omega^{n-k-1}(\partial M)$ consisting of forms $\psi \in \Omega^{n-k-1}(\partial M)$ satisfying
	\begin{equation}\label{eqn:imPhi}
		\int_{\partial M} \psi \wedge \chi = 0 \quad \forall \xi \in i^*\hk_N(M).
	\end{equation}
	In particular, $\Phi$ is a Fredholm operator with index zero.
\end{lemma}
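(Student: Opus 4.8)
The central computation is a Green-type identity. The plan is to apply the integration-by-parts formula, in the form $(d\alpha,\beta)_M - (\alpha,\delta\beta)_M = \int_{\partial M} i^*\alpha\wedge i^*\!\star\beta$ (where $(\cdot,\cdot)_M$ denotes the $L^2$ inner product on $\Omega(M)$), twice to the solution $\omega$ of \eqref{eqn:PhiPsiBVP}. Taking $\alpha=\delta\omega$, $\beta=\omega$ and using the boundary condition $i^*\!\star\omega=0$ kills one boundary term, while taking $\alpha=\omega$, $\beta=d\omega$ produces the boundary integral $\int_{\partial M}\varphi\wedge\Phi_k\varphi$. Adding the two identities and using $\Delta\omega=0$ should yield
\[
	\int_{\partial M}\varphi\wedge\Phi_k\varphi = \|d\omega\|^2 + \|\delta\omega\|^2.
\]
Running the same two integrations by parts with two solutions $\omega_1,\omega_2$ of \eqref{eqn:PhiPsiBVP} (for data $\varphi_1,\varphi_2$) gives the polarized version $\int_{\partial M}\varphi_1\wedge\Phi_k\varphi_2 = (d\omega_1,d\omega_2)_M + (\delta\omega_1,\delta\omega_2)_M$, which is manifestly symmetric in $\varphi_1$ and $\varphi_2$.

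For the kernel I would argue as follows. If $\varphi\in\ker\Phi_k$, the displayed identity forces $\|d\omega\|^2 + \|\delta\omega\|^2 = 0$, so $d\omega=\delta\omega=0$; together with $i^*\!\star\omega=0$ this says $\omega\in\hk_N(M)$, hence $\varphi = i^*\omega\in i^*\hk_N(M)$. Conversely, if $\varphi=i^*\eta$ for some $\eta\in\hk_N(M)$, then $\eta$ itself solves \eqref{eqn:PhiPsiBVP}, so by uniqueness of the solution \cite[Theorem~3.4.1]{Schwarz} we have $\omega=\eta$ and $\Phi_k\varphi=i^*\!\star d\eta=0$. This gives $\ker\Phi_k = i^*\hk_N(M)$, which is finite-dimensional of dimension $\beta_k(M)$.

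For the image, the symmetry of the polarized identity is the decisive structural fact: with respect to the nondegenerate pairing $(\mu,\nu)\mapsto\int_{\partial M}\mu\wedge\nu$ between $\Omega^k(\partial M)$ and $\Omega^{n-k-1}(\partial M)$ (Poincar\'e duality on the closed manifold $\partial M$), it says that $\Phi_k$ is, up to a universal sign, its own transpose. Taking $\varphi_2=i^*\eta\in\ker\Phi_k$ then shows immediately that $\int_{\partial M}(\Phi_k\varphi)\wedge i^*\eta = \pm\int_{\partial M}\varphi\wedge\Phi_k(i^*\eta) = 0$, so that $\operatorname{im}\Phi_k\subseteq(i^*\hk_N(M))^\perp$. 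To upgrade this inclusion to equality I would invoke the Fredholm alternative: once $\Phi_k$ is known to have closed range with finite-dimensional cokernel, self-transposition gives $\operatorname{coker}\Phi_k\cong\ker\Phi_k$, whence $\operatorname{im}\Phi_k=(\ker\Phi_k)^\perp=(i^*\hk_N(M))^\perp$ and $\operatorname{index}\Phi_k = \dim\ker\Phi_k - \dim\operatorname{coker}\Phi_k = 0$.

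The main obstacle is therefore the Fredholm property itself, which is the one point where genuine elliptic analysis is needed. I expect to settle it by observing that $\Phi_k$ is a first-order elliptic pseudodifferential operator on $\partial M$---this can be read off its principal symbol, or deduced from its relationship to the Dirichlet-to-Neumann operator $\Pi$---so that it is Fredholm between the appropriate Sobolev completions, with smooth kernel and cokernel by elliptic regularity. Alternatively, one can prove surjectivity onto $(i^*\hk_N(M))^\perp$ directly: applying $\star$ converts the condition $\psi\in\operatorname{im}\Phi_k$ into solvability of the boundary value problem $\Delta\theta=0$, $i^*\theta=0$, $i^*\delta\theta=\pm\psi$ for $\theta=\star\omega$, whose compatibility condition is precisely orthogonality of $\psi$ to the traces of the Dirichlet fields $\star\hk_N(M)$; this is supplied by the Hodge--Morrey--Friedrichs theory in \cite{Schwarz}. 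Either route closes the argument.
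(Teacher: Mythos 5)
Your treatment of the kernel is exactly the paper's: the same Green's‐formula identity $\int_{\partial M}\varphi\wedge\Phi_k\varphi=\|d\omega\|^2+\|\delta\omega\|^2$ forces $\omega\in\hk_N(M)$, and the converse is by uniqueness of the harmonic extension. For the image the paper takes your ``alternative'' route and nothing more: it observes that $\psi\in\mathrm{im}\,\Phi_k$ exactly when the boundary value problem $\Delta\omega=0$, $i^*\!\star\omega=0$, $i^*\!\star d\omega=\psi$ is solvable, and quotes the solvability criterion of \cite[Corollary~3.4.8]{Schwarz}, which is verbatim condition \eqref{eqn:imPhi}; your version of this (apply $\star$ and invoke the Hodge--Morrey--Friedrichs solvability theory, with the compatibility condition being orthogonality to $i^*\!\star\hk_D^{n-k}(M)=\pm\, i^*\hk_N^k(M)$) is the same argument. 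Your \emph{primary} route for the image is genuinely different and has a real merit --- the polarized identity $\int_{\partial M}\varphi_1\wedge\Phi_k\varphi_2=\langle d\omega_1,d\omega_2\rangle+\langle\delta\omega_1,\delta\omega_2\rangle$ gives the inclusion $\mathrm{im}\,\Phi_k\subseteq(i^*\hk_N(M))^\perp$ essentially for free, without any citation --- but as written it does not close: the Fredholm property of $\Phi_k$ (equivalently, ellipticity of $\star_\partial\Phi_k$ as a first-order pseudodifferential operator) is asserted rather than proved, and verifying it would require the symbol computation of Joshi--Lionheart, i.e.\ machinery well beyond what the lemma needs. Since you supply the Schwarz-based argument as a complete fallback, the proof stands; but if you want the pseudodifferential route to be self-contained you must actually exhibit the principal symbol of $\Phi_k$ and check its invertibility, which is not a one-line remark for $0<k<n-1$.
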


\begin{proof}
	If $\varphi \in \Omega^k(\partial M)$ such that $\Phi_k \varphi = 0$, then the boundary value problem
	\begin{equation}\label{eqn:kerPhiBVP}
		\begin{cases} \Delta \omega = 0 & \\ i^*\omega = \varphi, & i^*\!\star \omega = 0, \quad i^*\!\star d\omega = 0 \end{cases}
	\end{equation}
	is solvable. Using Green's formula,
	\[
		\langle d\omega, d\omega\rangle_{L^2} + \langle \delta \omega, \delta \omega \rangle_{L^2} = \langle \Delta \omega, \omega \rangle_{L^2} + \int_{\partial M} i^*(\omega \wedge \star d\omega - \delta \omega \wedge \star \omega).
	\]
	The right side of this equation equals zero since $\omega$ solves the boundary value problem \eqref{eqn:kerPhiBVP}. Hence, $\omega$ is a harmonic Neumann field since $i^*\!\star \omega = 0$, and so $\varphi = i^*\omega \in i^*\hk_N(M)$.
	
	The converse statement is immediate: if $\varphi = i^*\omega$ for $\omega \in \hk_N(M)$, then $\omega$ solves the boundary value problem \eqref{eqn:kerPhiBVP} and hence $\varphi \in \ker \Phi_k$.
	
	On the other hand, a form $\psi \in \Omega^{n-k-1}(\partial M)$ is in the image of $\Phi_k$ if and only if the boundary value problem
	\[
		\begin{cases} \Delta \omega = 0 & \\ i^*\!\star \omega = 0, & i^*\!\star d\omega = \psi \end{cases}
	\]
	is solvable. The defining condition \eqref{eqn:imPhi} of $(i^*\hk_N(M))^\perp$ is precisely the necessary and sufficient condition for the solvability of this boundary value problem \cite[Corollary~3.4.8]{Schwarz}.
\end{proof}

\begin{corollary}\label{cor:dPhi-1}
	The operator $d_\partial \Phi^{-1}$ is well-defined on $\mathrm{im}\, \Phi_k = (i^*\hk_N(M))^\perp$; i.e., the equation $\Phi \varphi = \psi$ has a solution $\varphi$ for every $\psi \in (i^*\hk_N(M))^\perp$ and $d_\partial \varphi$ is uniquely determined by $\psi$.
\end{corollary}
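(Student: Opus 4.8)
The plan is to treat the corollary as two separate assertions and handle each directly from Lemma~\ref{lem:kerPhi}. The first assertion---that $\Phi \varphi = \psi$ is solvable for every $\psi \in (i^*\hk_N(M))^\perp$---requires no new work: the lemma has already identified $\mathrm{im}\,\Phi_k$ with precisely this subspace, so the existence of a solution is immediate from the definition of the image.

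The substance of the corollary is the second assertion, that $d_\partial \varphi$ depends only on $\psi$. First I would suppose that $\varphi_1$ and $\varphi_2$ both solve $\Phi \varphi = \psi$, so that their difference $\varphi_0 := \varphi_1 - \varphi_2$ lies in $\ker \Phi_k$. By Lemma~\ref{lem:kerPhi} we have $\ker \Phi_k = i^*\hk_N(M)$, so I may write $\varphi_0 = i^*\omega$ for some harmonic Neumann field $\omega \in \hk_N(M)$. The key observation is that such a field is in particular closed, $d\omega = 0$. Since pullback commutes with the exterior derivative, $d_\partial i^* = i^* d$, I then obtain
\[
	d_\partial \varphi_0 = d_\partial i^*\omega = i^* d\omega = 0,
\]
whence $d_\partial \varphi_1 = d_\partial \varphi_2$. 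This shows that $d_\partial \varphi$ is well-defined as a function of $\psi$, completing the argument.

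I do not expect a genuine obstacle here: the corollary is essentially a bookkeeping consequence of the description of $\ker \Phi_k$ furnished by Lemma~\ref{lem:kerPhi}. The only conceptual ingredient is recognizing that membership in $\ker \Phi_k$ forces the boundary trace to be $d_\partial$-closed, which follows the moment one recalls that harmonic Neumann fields are closed. If anything merits care, it is simply making explicit the intertwining relation $d_\partial i^* = i^* d$ that transfers closedness from $\omega$ to its trace $i^*\omega$.
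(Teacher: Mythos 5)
Your argument is correct and matches the paper's proof: existence comes straight from Lemma~\ref{lem:kerPhi}'s identification of $\mathrm{im}\,\Phi_k$, and uniqueness of $d_\partial\varphi$ follows because $\varphi_1-\varphi_2\in\ker\Phi_k=i^*\hk_N(M)$ is the trace of a closed form and hence $d_\partial$-closed. You simply spell out the step ($d_\partial i^*=i^*d$ applied to a harmonic Neumann field) that the paper compresses into the phrase ``is closed.''
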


\begin{proof}
	A form $\psi \in (i^*\hk_N(M))^\perp$ belongs to the range of $\Phi$, so the equation $\Phi \varphi = \psi$ is solvable. If $\Phi \varphi_1 = \Phi \varphi_2$, then the form $\varphi_1 - \varphi_2 \in \ker \Phi$ is closed, meaning that $d_\partial \varphi_1 = d_\partial \varphi_2$.
\end{proof}

The apparent similarity between the operator $d_\partial \Phi^{-1}$ and the Hilbert transform $T = d_\partial \Lambda^{-1}$ defined by Belishev and Sharafutdinov is no accident, as the following proposition demonstrates. Thus, the connection to the Poincar\'e duality angles of $M$ \cite[Theorem 4]{shonk} comes directly from the definition of $\Phi$ (and hence $\Pi$) without using $\Lambda$ as an intermediary.

\begin{proposition}\label{prop:dPhi-1}
	$d_\partial \Lambda^{-1} = d_\partial \Phi^{-1}$, where the term on the right-hand side is understood to be the restriction of $d_\partial \Phi^{-1}$ to $\mathrm{im}\, \Lambda = i^*\hk(M)$.
\end{proposition}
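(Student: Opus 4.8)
The plan is to prove the identity by comparing the two harmonic extensions that compute each side, and showing their exterior derivatives agree. First, for the right-hand side to make sense on $\mathrm{im}\,\Lambda$ I would check that $\mathrm{im}\,\Lambda\subseteq\mathrm{im}\,\Phi$. Using the identification of $\mathrm{im}\,\Lambda=i^*\mathcal{H}(M)$ with boundary traces of harmonic fields, together with the description of $\mathrm{im}\,\Phi$ from \lemref{lem:kerPhi} as the forms satisfying the pairing condition \eqref{eqn:imPhi}, this is immediate: if $\psi=i^*\lambda$ with $\lambda$ a harmonic field and $\kappa\in\mathcal{H}^k_N(M)$, then $\int_{\partial M}i^*\lambda\wedge i^*\kappa=\int_M d(\lambda\wedge\kappa)=0$, since $\lambda$ and $\kappa$ are both closed.

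Now fix $\psi\in\mathrm{im}\,\Lambda$ and choose preimages on each side. Let $\omega$ be the harmonic extension solving \eqref{eqn:LambdaBVP} for some $\varphi$ with $\Lambda\varphi=\psi$, so that $d_\partial\Lambda^{-1}\psi=d_\partial\varphi$; let $\omega'$ solve \eqref{eqn:PhiPsiBVP} for some $\tilde\varphi$ with $\Phi\tilde\varphi=\psi$, so that (by Corollary~\ref{cor:dPhi-1}) $d_\partial\Phi^{-1}\psi=d_\partial\tilde\varphi$. The essential observation is that $\omega$ and $\omega'$ are harmonic extensions of the \emph{same} datum under $i^*\!\star d$. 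I would then study the single interior form $u:=d(\omega'-\omega)$: it is harmonic, it is exact (hence $du=0$), and its normal trace cancels, $i^*\!\star u=i^*\!\star d\omega'-i^*\!\star d\omega=\psi-\psi=0$.

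The heart of the argument is to show $u=0$. Since $u$ is harmonic with $i^*\!\star u=0$ and $i^*\!\star du=0$ (the latter because $du=0$), it satisfies exactly the boundary conditions of \eqref{eqn:kerPhiBVP}, so the Green's formula used in the proof of \lemref{lem:kerPhi} forces $\|du\|^2_{L^2}+\|\delta u\|^2_{L^2}=0$; thus $\delta u=0$ and $u$ is a harmonic Neumann field. But $u$ is also exact, $u=d(\omega'-\omega)$, so a second application of Green's formula gives $\|u\|^2_{L^2}=\langle\omega'-\omega,\delta u\rangle+\int_{\partial M}i^*\big((\omega'-\omega)\wedge\star u\big)=0$, using $\delta u=0$ and $i^*\!\star u=0$. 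Hence $u=0$, i.e. $d\omega=d\omega'$, and pulling back by $i^*$ yields $d_\partial\varphi=d_\partial\tilde\varphi$, which is the desired equality $d_\partial\Lambda^{-1}=d_\partial\Phi^{-1}$ on $\mathrm{im}\,\Lambda$.

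I expect the delicate point to be conceptual rather than computational. A priori $u$ is only known to have an \emph{exact boundary trace}, and an exact trace of a harmonic Neumann field need not vanish, since the restriction $H^{k+1}(M;\mathbb{R})\to H^{k+1}(\partial M;\mathbb{R})$ generally has a kernel. What rescues the argument is that $u$ is exact in the \emph{interior} of $M$, not merely on the boundary, and interior exactness together with the Neumann condition is precisely what the second Green's formula converts into $u=0$. I would take care to verify that the cancellation $i^*\!\star u=0$ uses nothing beyond $\omega$ and $\omega'$ being harmonic extensions of the common datum $\psi$, so that neither distinguishing hypothesis ($i^*\delta\omega=0$ for $\Lambda$, $i^*\!\star\omega=0$ for $\Phi$) is silently required.
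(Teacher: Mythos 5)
Your argument is correct, but it takes a genuinely different route from the paper's. The paper constructs an explicit \emph{common} preimage: writing $\varphi = i^*\omega$ for $\omega \in \hk(M)$, it invokes the Friedrichs decomposition $\hk(M) = c\mathcal{EH}^k(M)\oplus\hk_D(M)$ and the refined primitive $\xi$ (closed, harmonic, $i^*\xi = 0$, from Schwarz), then checks that $\star\,\xi$ solves the boundary value problems defining both $\Lambda$ and $\Phi$, so that $i^*\!\star\xi$ is simultaneously a $\Lambda$- and $\Phi$-preimage of $(-1)^{nk+1}\varphi$ and the identity follows at once. You instead take \emph{arbitrary} preimages under each operator and prove a uniqueness statement: the difference $u = d(\omega'-\omega)$ of the two harmonic extensions is harmonic, exact, and has $i^*\!\star u = 0$, so one Green's identity forces $\delta u = 0$ and a second one (exactness paired against the Neumann condition) forces $\|u\|^2 = 0$. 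Your closing remark correctly isolates why this works — an exact form is $L^2$-orthogonal to every harmonic Neumann field, so an exact harmonic Neumann field vanishes, even though exactness of the boundary trace alone would not suffice. What your approach buys is independence from the Friedrichs decomposition and from the existence of the special primitive $\xi$; it also cleanly separates the two issues (well-definedness of $\Phi^{-1}$ on $\mathrm{im}\,\Lambda$, which you handle by Stokes against $i^*\hk_N(M)$ via Lemma~\ref{lem:kerPhi}, versus agreement of the derivatives). What the paper's construction buys is more explicit information: it exhibits a single boundary form that both operators carry to $(-1)^{nk+1}\varphi$, which also verifies $\mathrm{im}\,\Lambda\subseteq\mathrm{im}\,\Phi$ as a byproduct. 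Both proofs rely on the previously established facts that $d_\partial\Lambda^{-1}$ and $d_\partial\Phi^{-1}$ are independent of the choice of preimage (the latter is Corollary~\ref{cor:dPhi-1}), so there is no gap on that score.
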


\begin{proof}
	Suppose $\varphi \in \text{im}\, \Lambda = i^*\hk(M)$. Then $\varphi = i^*\omega$ for some $\omega \in \hk(M)$. The Friedrichs decomposition says that
	\[
		\hk(M) = c\mathcal{EH}^k(M) \oplus \hk_D(M),
	\]
	where
	\begin{align*}
		c\mathcal{EH}^k(M) & = \{\delta \xi \in \Omega^k(M): d\delta \xi = 0\} \\
		\hk_D(M) & = \{\eta \in \Omega^k(M) : d\eta = 0, \delta \eta = 0, i^*\eta = 0\}.
	\end{align*}
	Hence,
	\[
		\omega = \delta \xi + \eta \in c\mathcal{EH}^k(M) \oplus \hk_D(M).
	\]
	The form $\xi \in \Omega^{k+1}(M)$ can be chosen such that $\xi$ is closed, $\Delta \xi = 0$, and $i^*\xi =0$ \cite[p. 87, Remark 2]{Schwarz}. Therefore,
	\[
		\begin{cases} \Delta \star \xi = 0, \\ i^*\!\star(\star\, \xi) = 0, \\ i^*\delta \star \xi = \pm i^*\!\star d \star \star\, \xi = \pm i^*\!\star d\xi = 0. \end{cases}
	\]
	This implies that $\star\, \xi$ solves the boundary value problems associated to both $\Lambda$ and $\Phi$, so
	\[
		\Lambda i^*\!\star \xi = i^*\!\star d \star \xi = (-1)^{nk+1} i^*\delta \xi = (-1)^{nk+1}i^*\omega = (-1)^{nk+1}\varphi
	\]
	and
	\[
		\Phi i^*\!\star \xi = i^*\!\star d \star \xi = (-1)^{nk+1} i^*\delta \xi = (-1)^{nk+1}i^*\omega = (-1)^{nk+1} \varphi.
	\]
	Hence,
	\[
		d\Lambda^{-1} \varphi = (-1)^{nk+1} d\, i^*\!\star \xi = d\Phi^{-1}i^*\!\star \xi,
	\]
	so we conclude that, indeed, $d\Lambda^{-1} = d\Phi^{-1}$.
\end{proof}

% section betti_numbers (end)

\section{The homology of the chain complex $(\Omega^*(\partial M), \Psi)$} % (fold)
\label{sec:homology_psi}

We saw in \lemref{lem:PhiPsiproperties} that $\Psi^2 = 0$, so it is natural to ask: what is the homology of the chain complex $(\Omega^*(\partial M), \Psi)$? 

\begin{Psihomology}
	For any $0 \leq k \leq n-1$, if $\Psi_k: \Omega^k(\partial M) \to \Omega^{k-1}(\partial M)$ is the restriction of $\Psi$ to the space of $k$-forms on $\partial M$, then
	\[
		H_k(\Omega^*(\partial M), \Psi) = \frac{\ker \Psi_k}{\mathrm{im}\, \Psi_{k+1}} \simeq H^{k+1}(M, \partial M; \mathbb{R}) \oplus H^k(M; \mathbb{R}).
	\]
\end{Psihomology}

In other words, the homology groups of $(\Omega^*(\partial M), \Psi)$ contain the absolute cohomology groups of $M$ in the same dimension and echoes of the relative cohomology groups of $M$ in one higher dimension. This behavior is similar to that exhibited by the cohomology of harmonic forms studied by Cappell, DeTurck, Gluck, and Miller \cite{CDGM}.

Since $H^k(M; \mathbb{R}) \simeq \ker \Phi_k$ (by \thmref{thm:PhiBettinumbers}) and since it will turn out that $\mathrm{im}\, \Psi_{k+1}$ completely misses $\ker \Phi_k$, we can see the echo of the $(k+1)$st relative cohomology group of $M$ inside the space of $k$-forms on $\partial M$.

\begin{echo}
	The space $\Omega^k(\partial M)$ of $k$-forms on $\partial M$ contains a space isomorphic to $H^{k+1}(M, \partial M; \mathbb{R})$ which is distinguished by the Dirichlet-to-Neu\-mann operator $\Pi$. Specifically,
	\[
		(\ker \Psi_k/\mathrm{im}\, \Psi_{k+1})/\ker \Phi_k \simeq H^{k+1}(M, \partial M; \mathbb{R}).
	\]
\end{echo}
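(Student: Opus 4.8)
The plan is to deduce the corollary by combining \thmref{thm:Psihomology} with \thmref{thm:PhiBettinumbers}. By \thmref{thm:PhiBettinumbers} (via \lemref{lem:kerPhi}) we have $\ker\Phi_k = i^*\hk_N(M)$ with $\dim\ker\Phi_k = \dim H^k(M;\mathbb{R})$, while \thmref{thm:Psihomology} identifies the homology $H_k(\Omega^*(\partial M),\Psi) = \ker\Psi_k/\mathrm{im}\,\Psi_{k+1}$ with $H^{k+1}(M,\partial M;\mathbb{R})\oplus H^k(M;\mathbb{R})$. The strategy is to show that $\ker\Phi_k$ sits inside the cycles $\ker\Psi_k$ and descends to a faithful copy of itself in the homology, realizing the $H^k(M;\mathbb{R})$ summand; quotienting by it then leaves exactly $H^{k+1}(M,\partial M;\mathbb{R})$. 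Concretely, I must verify two containments: $\ker\Phi_k\subseteq\ker\Psi_k$ (so that the quotient makes sense) and $\ker\Phi_k\cap\mathrm{im}\,\Psi_{k+1}=\{0\}$ (so that $\mathrm{im}\,\Psi_{k+1}$ ``completely misses'' $\ker\Phi_k$ and the induced map $\ker\Phi_k \to H_k(\Omega^*(\partial M),\Psi)$ is injective).

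For the first containment, suppose $\Phi_k\varphi=0$. By \lemref{lem:kerPhi}, $\varphi = i^*\alpha$ for some harmonic Neumann field $\alpha\in\hk_N(M)$, so $d\alpha=0$, $\delta\alpha=0$, and $i^*\!\star\alpha=0$. Then $\alpha$ itself solves the boundary value problem \eqref{eqn:PhiPsiBVP} that defines $\Phi$ and $\Psi$, and uniqueness of that solution gives $\Psi\varphi = i^*\delta\alpha = 0$. Hence $\ker\Phi_k\subseteq\ker\Psi_k$, and its image in the homology is well-defined.

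The crux of the argument is the second containment, which I expect to be the main obstacle, since it is the one place where one must genuinely exploit harmonicity and the boundary conditions rather than formal algebra. Take $\varphi\in\ker\Phi_k\cap\mathrm{im}\,\Psi_{k+1}$ and write $\varphi = i^*\alpha$ with $\alpha\in\hk_N(M)$ as above, and $\varphi = \Psi\theta = i^*\delta\beta$ where $\beta\in\Omega^{k+1}(M)$ solves $\Delta\beta=0$, $i^*\beta = \theta$, $i^*\!\star\beta=0$. I would first check, using $\Delta\delta\beta = \delta\Delta\beta = 0$ and $i^*\!\star\delta\beta = \pm d_\partial i^*\!\star\beta = 0$, that $\delta\beta$ solves the very same problem \eqref{eqn:PhiPsiBVP} as $\alpha$ (both are harmonic with trace $\varphi$ and vanishing $i^*\!\star$); uniqueness then forces $\alpha = \delta\beta$. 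Finally, Green's formula in the form $\langle d\alpha,\beta\rangle - \langle\alpha,\delta\beta\rangle = \int_{\partial M} i^*\alpha\wedge i^*\!\star\beta$ collapses, because $d\alpha=0$ and $i^*\!\star\beta=0$, to $\langle\alpha,\delta\beta\rangle = 0$; combined with $\alpha=\delta\beta$ this yields $\|\alpha\|_{L^2}^2 = 0$, so $\alpha = 0$ and $\varphi = 0$. This twin use of uniqueness (to identify $\alpha$ with $\delta\beta$) and of Green's formula (to kill its norm) is precisely what makes the two summands of \thmref{thm:Psihomology} disjoint.

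With both containments in hand, $\ker\Phi_k$ injects into $H_k(\Omega^*(\partial M),\Psi)$ with image of dimension $\dim\ker\Phi_k = \dim H^k(M;\mathbb{R})$. Since \thmref{thm:Psihomology} gives $\dim H_k(\Omega^*(\partial M),\Psi) = \dim H^{k+1}(M,\partial M;\mathbb{R}) + \dim H^k(M;\mathbb{R})$, the quotient $(\ker\Psi_k/\mathrm{im}\,\Psi_{k+1})/\ker\Phi_k$ has dimension $\dim H^{k+1}(M,\partial M;\mathbb{R})$ and is therefore isomorphic to $H^{k+1}(M,\partial M;\mathbb{R})$, as claimed. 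For a more canonical statement I would further observe that the copy of $\ker\Phi_k$ inside the homology is exactly the $H^k(M;\mathbb{R})$ summand appearing in \thmref{thm:Psihomology} --- both originate from the harmonic Neumann fields $\hk_N(M)$ --- so that the quotient is naturally, and not merely abstractly, isomorphic to $H^{k+1}(M,\partial M;\mathbb{R})$.
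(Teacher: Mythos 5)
Your proposal is correct and follows essentially the same route as the paper: the corollary is deduced from \thmref{thm:Psihomology} together with the two containments $\ker\Phi_k\subseteq\ker\Psi_k$ and $\ker\Phi_k\cap\mathrm{im}\,\Psi_{k+1}=\{0\}$, which the paper obtains from Corollary~\ref{cor:domainrangePhiPsi} and the directness of the sum in \lemref{lem:Psikernel} (both ultimately resting, as in your argument, on uniqueness of the harmonic extension and the orthogonality of $\hk_N(M)$ to $c\mathcal{E}^k_N(M)$). Your explicit Green's-formula verification of the second containment is a slightly more self-contained rendering of what the paper dispatches by citing that ``harmonic forms are uniquely determined by their boundary values,'' but it is the same underlying mechanism.
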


When $n = 2$ and $k = 0$, \thmref{thm:PhiBettinumbers} and Corollary~\ref{cor:echo} imply that $H^0(M; \mathbb{R})$ and $H^1(M, \partial M; \mathbb{R})$ can be distinguished inside the space of functions on $\partial M$. Moreover, by Poincar\'e--Lefschetz duality, $H^0(M; \mathbb{R}) \simeq H^2(M, \partial M; \mathbb{R})$ and $H^1(M, \partial M; \mathbb{R}) \simeq H^1(M; \mathbb{R})$. Since $H^0(M, \partial M; \mathbb{R})$ and $H^2(M; \mathbb{R})$ are both trivial, we have the following corollary.
	
\begin{surfacecohomology}
	All of the cohomology groups of a surface $M$ with boundary can be realized inside the space of smooth functions on $\partial M$, where they can be recovered by the Dirichlet-to-Neumann operator $\Pi$.
\end{surfacecohomology}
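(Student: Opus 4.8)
The plan is to specialize the two preceding theorems to the case $n = 2$, $k = 0$ and then convert the relative cohomology that appears into absolute cohomology via Poincar\'e--Lefschetz duality. Since $\dim M = 2$, the boundary $\partial M$ is one-dimensional, so $\Omega^0(\partial M) = C^\infty(\partial M)$ is precisely the space of smooth functions; moreover $\Phi_0$ maps $\Omega^0(\partial M) \to \Omega^1(\partial M)$ and $\Psi_1$ maps $\Omega^1(\partial M) \to \Omega^0(\partial M)$. First I would invoke \thmref{thm:PhiBettinumbers} with $k = 0$ to identify $H^0(M; \mathbb{R}) \simeq \ker \Phi_0 \subset \Omega^0(\partial M)$; this already realizes the zeroth cohomology group inside the function space as data of $\Phi$, and hence of $\Pi$.

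Next I would apply Corollary~\ref{cor:echo} with $k = 0$. Because the target of $\Psi_0$ is $\Omega^{-1}(\partial M) = 0$, we have $\ker \Psi_0 = \Omega^0(\partial M)$, so the corollary reads $(\Omega^0(\partial M)/\mathrm{im}\,\Psi_1)/\ker \Phi_0 \simeq H^1(M, \partial M; \mathbb{R})$. Thus a copy of $H^1(M, \partial M; \mathbb{R})$ sits inside $\Omega^0(\partial M)$ and is distinguished by the pair $(\Phi, \Psi)$, i.e.\ by the entries $\Pi_{11}$ and $\Pi_{21}$ of the matrix in \eqref{eqn:PiPhiPsi}, and therefore by $\Pi$.

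The last step is to pass from relative to absolute cohomology. Poincar\'e--Lefschetz duality for the compact oriented surface gives $H^1(M, \partial M; \mathbb{R}) \simeq H^{2-1}(M; \mathbb{R}) = H^1(M; \mathbb{R})$, so the echo above realizes $H^1(M; \mathbb{R})$ inside $\Omega^0(\partial M)$. It then remains only to account for $H^2(M; \mathbb{R})$: a compact surface with nonempty boundary has the homotopy type of a one-dimensional complex (equivalently, each component meets the boundary, forcing $H^0(M, \partial M; \mathbb{R}) = 0$ and hence, by duality, $H^2(M; \mathbb{R}) \simeq H^0(M, \partial M; \mathbb{R}) = 0$), so the top cohomology vanishes and is trivially realized. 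Combining the three cases shows that $H^0(M; \mathbb{R})$, $H^1(M; \mathbb{R})$, and $H^2(M; \mathbb{R})$ all live inside $\Omega^0(\partial M)$ and are recovered from $\Phi$ and $\Psi$, hence from $\Pi$.

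Regarding difficulty, this is essentially bookkeeping built on the two theorems already established, and I do not expect a substantive obstacle. The only point I would check carefully is that \emph{both} realizations---the kernel of $\Phi_0$ and the echo quotient---genuinely land in the \emph{single} space $\Omega^0(\partial M)$ rather than in two different form degrees; this is exactly what forces the restriction to $n = 2$, since only then do the relevant domains and codomains of $\Phi_0$ and $\Psi_1$ line up with the function space. The duality isomorphism and the vanishing of $H^2(M;\mathbb{R})$ are standard and require no more than the remarks above.
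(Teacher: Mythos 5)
Your proposal is correct and follows essentially the same route as the paper: specialize \thmref{thm:PhiBettinumbers} and Corollary~\ref{cor:echo} to $n=2$, $k=0$, apply Poincar\'e--Lefschetz duality to convert $H^1(M,\partial M;\mathbb{R})$ into $H^1(M;\mathbb{R})$, and observe that $H^2(M;\mathbb{R})$ vanishes. The paper's only additional remark is the dual identification $H^0(M;\mathbb{R})\simeq H^2(M,\partial M;\mathbb{R})$, which lets it account for the relative groups as well, but your argument covers everything the statement requires.
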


\thmref{thm:Psihomology} will follow from Lemmas~\ref{lem:Psikernel} and \ref{lem:Psiimage}, which describe the kernel and image of $\Psi$.

\begin{lemma}\label{lem:Psikernel}
	If $\Psi_k: \Omega^k(\partial M) \to \Omega^{k-1}(\partial M)$ is the restriction of $\Psi$ to the space of $k$-forms on $\partial M$, then $\ker \Psi_k$ is a direct sum of three spaces:
	\begin{enumerate}
		\item \label{enum:Psikernel1} The pullbacks of harmonic Neumann fields
		\[
			i^*\hk_N(M) = \ker \Phi_k.
		\]
		
		\item \label{enum:Psikernel2} The space
		\[
			\ker G_k \cap i^*\left((\mathcal{C}^k(M))^\perp \right),
		\]
		which consists of the pullbacks of $k$-forms with conjugates on $M$ which are perpendicular to the space of closed forms.
		
		\item \label{enum:Psikernel3} A space isomorphic to $H^{k+1}(M, \partial M; \mathbb{R})$.
	\end{enumerate}
\end{lemma}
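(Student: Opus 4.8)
The plan is to transport the computation of $\ker\Psi_k$ into the interior of $M$ and then apply Hodge theory. First I would identify $\ker\Psi_k$ with the space
\[
	K = \{\omega\in\Omega^k(M): \Delta\omega = 0,\ i^*\!\star\omega = 0,\ i^*\delta\omega = 0\},
\]
the correspondence being $\varphi = i^*\omega \leftrightarrow \omega$; this is a bijection because a harmonic form with $i^*\!\star\omega = 0$ is determined by $\varphi = i^*\omega$, exactly as in the definition of $\Psi$. The crucial first step, which I expect to do most of the work, is to show that \emph{every} $\omega\in K$ is already coclosed. Setting $\zeta = \delta\omega$, one checks directly that $\Delta\zeta = 0$, $\delta\zeta = 0$, $i^*\zeta = i^*\delta\omega = 0$, and $i^*\!\star\zeta = \pm d_\partial i^*\!\star\omega = 0$. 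Feeding these into the same Green's formula used in \lemref{lem:kerPhi} makes the boundary terms vanish and forces $d\zeta = 0$, so $\zeta$ is a harmonic field with $i^*\zeta = 0$, i.e. $\zeta\in\mathcal{H}^{k-1}_D(M)$. But $\zeta = \delta\omega$ satisfies $d\delta\omega = 0$, so $\zeta\in c\mathcal{EH}^{k-1}(M)$ as well; by the Friedrichs decomposition $\mathcal{H}^{k-1}(M) = c\mathcal{EH}^{k-1}(M)\oplus\mathcal{H}^{k-1}_D(M)$ this intersection is trivial, so $\delta\omega = 0$. Hence $K$ consists precisely of the coclosed harmonic Neumann $k$-forms, and for each such $\omega$ the $(k+1)$-form $\eta := d\omega$ is automatically a harmonic field.

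With $\delta\omega = 0$ in hand, Green's formula collapses to $\|d\omega\|^2 = \int_{\partial M}\varphi\wedge\Phi\varphi$. The locus where this pairing vanishes is exactly $\{\omega\in K : d\omega = 0\} = \{\omega : d\omega = \delta\omega = 0,\ i^*\!\star\omega = 0\} = \mathcal{H}^k_N(M)$, whose boundary traces are $\ker\Phi_k$ by \lemref{lem:kerPhi} (and $\thmref{thm:PhiBettinumbers}$). This is summand \eqref{enum:Psikernel1}, and the identity cleanly separates it from the remainder of $K$, since $\omega\mapsto d\omega$ has kernel exactly $\mathcal{H}^k_N(M)$.

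For the complement I would study the linear map $\omega\mapsto \eta = d\omega$ from $K$ into the exact harmonic $(k+1)$-fields $\mathcal{H}^{k+1}(M)\cap d\Omega^k(M)$; by the previous paragraph it embeds $K/\mathcal{H}^k_N(M)$ into this space. Composing with the orthogonal projection onto the Dirichlet fields arising from the Friedrichs decomposition $\mathcal{H}^{k+1}(M) = \mathcal{H}^{k+1}_D(M)\oplus c\mathcal{EH}^{k+1}(M)$ produces a map to $\mathcal{H}^{k+1}_D(M)\simeq H^{k+1}(M,\partial M;\mathbb{R})$. Summand \eqref{enum:Psikernel3} is then a section of this projection, while its kernel---the $\omega\in K$ for which $d\omega$ is simultaneously exact and coexact, which I expect to match the traces of forms carrying a conjugate and lying in $(\mathcal{C}^k(M))^\perp$---is summand \eqref{enum:Psikernel2}, to be reconciled with the description via $\ker G_k$ and with $\mathrm{im}\,\Psi_{k+1}$ in \lemref{lem:Psiimage}.

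The main obstacle is the surjectivity of this Dirichlet projection restricted to $dK$: I must show that every Dirichlet $(k+1)$-field is the harmonic Dirichlet component of $d\omega$ for some $\omega\in K$, equivalently that every relative class in $H^{k+1}(M,\partial M;\mathbb{R})$ is realized by a coclosed harmonic Neumann $k$-form. I expect to obtain such an $\omega$ by solving an auxiliary boundary value problem for the Hodge Laplacian with a cohomological side condition, organized by the long exact sequence of the pair $(M,\partial M)$ in the spirit of Cappell--DeTurck--Gluck--Miller \cite{CDGM}. Once this realization result is in place, the remaining bookkeeping is routine: the independence of the three summands follows from $\ker(d|_K) = \mathcal{H}^k_N(M)$ together with the Green identity (which shows $\mathrm{im}\,\Psi_{k+1}$ misses $\ker\Phi_k$), and the identification of the projection's kernel with the operator-theoretic description in terms of $G$ follows from \lemref{lem:Psiimage}.
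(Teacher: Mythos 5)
Your first step is correct and genuinely different from the paper's, and arguably cleaner. The paper proves that the harmonic extension $\omega$ of a $\varphi\in\ker\Psi_k$ is coclosed by pushing $\omega$ through the Hodge--Morrey decomposition $\omega=\delta\xi+\kappa+d\zeta$ and killing $d\zeta$ with two orthogonality arguments; you instead apply Green's formula directly to $\zeta=\delta\omega$ (all four of the hypotheses you list do hold) to get $d\zeta=0$, and then use the Friedrichs decomposition $\mathcal{H}^{k-1}(M)=c\mathcal{EH}^{k-1}(M)\oplus\mathcal{H}^{k-1}_D(M)$ to force $\zeta=0$. Both routes land on the same intermediate statement, namely $\ker\Psi_k=i^*\bigl(\mathcal{H}^k_N(M)\bigr)+i^*\bigl(c\mathcal{E}^k_N(M)\cap\ker\Delta\bigr)$ with the first summand equal to $\ker\Phi_k$, and your identity $\|d\omega\|^2=\int_{\partial M}\varphi\wedge\Phi\varphi$ is a tidy way to isolate summand (i). (For directness of the sum of trace spaces you do need, and use only implicitly, that $i^*$ is injective on harmonic forms with $i^*\!\star\omega=0$; the paper cites \cite[Theorem 3.4.10]{Schwarz} for this.)

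The second half, however, is a plan rather than a proof, and two substantive steps are missing. First, the surjectivity you call the ``main obstacle'' --- that every class in $H^{k+1}(M,\partial M;\mathbb{R})\simeq\mathcal{H}^{k+1}_D(M)$ is the Dirichlet component of $d\omega$ for some $\omega\in K$ --- is left unproved. It is true, and you do not need an auxiliary boundary value problem or the long exact sequence: since $dK=\mathcal{EH}^{k+1}(M)$ and the two Friedrichs decompositions are orthogonal, the image of $\mathcal{EH}^{k+1}(M)=\mathcal{H}^{k+1}(M)\ominus\mathcal{H}^{k+1}_N(M)$ under the orthogonal projection $P_D$ onto $\mathcal{H}^{k+1}_D(M)$ is the orthogonal complement in $\mathcal{H}^{k+1}_D(M)$ of $\mathcal{H}^{k+1}_D(M)\cap\mathcal{H}^{k+1}_N(M)=0$, i.e.\ everything; alternatively the paper simply quotes \cite[Lemma 3]{CDGM}, which asserts that the subspace $\mathcal{N}^k$ of $c\mathcal{E}^k_N(M)\cap\ker\Delta$ with non-harmonic primitive is isomorphic to $H^{k+1}(M,\partial M;\mathbb{R})$. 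Second, identifying $\ker(P_D\circ d)$ modulo $\mathcal{H}^k_N(M)$ with $\ker G_k\cap i^*\bigl((\mathcal{C}^k(M))^\perp\bigr)$ is part of the statement of the lemma, not something that can be deferred to a later reconciliation with \lemref{lem:Psiimage}: you must invoke the conjugate-form characterization of $\ker G_k$ from \cite[Theorem 5.1]{BS} to pass from ``$d\omega$ is exact and coexact, $\omega$ harmonic and coexact'' to ``$\omega$ has a conjugate form, hence $i^*\omega\in\ker G_k$,'' with coexactness of $\omega$ supplying membership in $i^*\bigl((\mathcal{C}^k(M))^\perp\bigr)$. Until those two steps are supplied, summands (ii) and (iii) are only described as the kernel and a section of an unanalyzed projection, and the lemma as stated is not established.
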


The operator $G_k$ is the restriction to $\Omega^k(\partial M)$ of the operator $G$ defined in \eqref{eqn:Gdef}.

\begin{lemma}\label{lem:Psiimage}
	The image of the operator $\Psi_{k+1}: \Omega^{k+1}(\partial M) \to \Omega^k(\partial M)$ is precisely the space
	\[
		\ker G_k \cap \,i^*\!\left((\mathcal{C}^k(M))^\perp \right).
	\]
\end{lemma}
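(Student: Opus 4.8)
The plan is to prove the two inclusions separately, reformulating membership in $\mathrm{im}\,\Psi_{k+1}$ as the solvability of a boundary value problem. The basic observation is that if $\psi = \Psi_{k+1}\varphi$, where $\omega \in \Omega^{k+1}(M)$ solves $\Delta\omega = 0$, $i^*\omega = \varphi$, $i^*\!\star\omega = 0$, then the form $\xi := \delta\omega$ is again harmonic and satisfies $\delta\xi = 0$ together with $i^*\!\star\xi = \pm d_\partial i^*\!\star\omega = 0$; moreover $\psi = i^*\xi$. Thus $\mathrm{im}\,\Psi_{k+1}$ is exactly the set of boundary traces $i^*\delta\omega$ as $\omega$ ranges over harmonic $(k+1)$-forms with $i^*\!\star\omega = 0$, and the whole lemma amounts to identifying when such a trace can be produced by a \emph{harmonic} $\omega$.

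\emph{Forward inclusion.} I would first show $\psi \in i^*\!\left((\mathcal{C}^k(M))^\perp\right)$. By Green's formula, for any closed $\gamma \in \Omega^k(M)$ one has $\langle \delta\omega, \gamma\rangle_{L^2} = \langle \omega, d\gamma\rangle_{L^2} - \int_{\partial M} i^*\gamma \wedge i^*\!\star\omega = 0$, using $d\gamma = 0$ and $i^*\!\star\omega = 0$; hence $\xi = \delta\omega \perp \mathcal{C}^k(M)$ and $\psi = i^*\xi \in i^*\!\left((\mathcal{C}^k(M))^\perp\right)$. The same computation in fact identifies $(\mathcal{C}^k(M))^\perp$ with the coexact forms $\delta\omega$ satisfying $i^*\!\star\omega = 0$. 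Next I would verify $\psi \in \ker G_k$. Since $\delta\xi = 0$, the form $\xi$ solves both the $\Phi$- and the $\Lambda$-boundary value problems with data $\psi$, so $\Psi\psi = i^*\delta\xi = 0$ and $\Lambda\psi = i^*\!\star d\xi = \Phi\psi$; this is consistent with \eqref{eqn:LambdaPhiPsi}, which collapses to $\Lambda = \Phi$ on $\ker\Psi_k$. Using \propref{prop:dPhi-1} to replace $d_\partial\Lambda^{-1}$ by $d_\partial\Phi^{-1}$ in the definition \eqref{eqn:Gdef} of $G$, and then the relations of \lemref{lem:PhiPsiproperties} (in particular \eqref{eqn:Phi2} and \eqref{eqn:PsiPhi}) to rewrite $d_\partial\Phi^{-1}d_\partial\psi$, the two terms defining $G_k\psi$ should cancel, giving $G_k\psi = 0$. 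This is a finite algebraic computation of the same flavor as Remark~\ref{rem:Lambdaproperties}.

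\emph{Reverse inclusion.} Given $\psi \in \ker G_k \cap i^*\!\left((\mathcal{C}^k(M))^\perp\right)$, I must produce a harmonic $\omega \in \Omega^{k+1}(M)$ with $i^*\!\star\omega = 0$ and $i^*\delta\omega = \psi$. Applying the Hodge star, this is equivalent to solving, for $\tau = \star\omega \in \Omega^{n-k-1}(M)$, the Dirichlet-type problem $\Delta\tau = 0$, $i^*\tau = 0$, $i^*\!\star d\tau = \pm\psi$. By Schwarz's solvability criterion \cite[Corollary~3.4.8]{Schwarz}, in the Dirichlet form dual to the one used in \lemref{lem:kerPhi}, this problem is solvable precisely when $\psi$ is orthogonal, in the pairing $\int_{\partial M}\psi \wedge (\cdot)$, to the boundary traces of the relevant harmonic Dirichlet fields. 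The crux is then to show that this orthogonality condition, intersected with $i^*\!\left((\mathcal{C}^k(M))^\perp\right)$, is exactly the condition $\psi \in \ker G_k$.

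I expect this last identification to be the main obstacle: it requires translating the analytic solvability obstruction into the operator-theoretic condition $G_k\psi = 0$. I would attack it by expanding $G_k\psi$ via \eqref{eqn:Gdef}, \eqref{eqn:LambdaPhiPsi}, and \propref{prop:dPhi-1}, and then invoking the Hodge--Morrey--Friedrichs decomposition \cite[Section~2.4]{Schwarz} to match the harmonic-field components surviving in $G_k\psi$ against the boundary traces appearing in the solvability criterion. Once both inclusions are established, comparison with part (ii) of \lemref{lem:Psikernel} confirms that $\mathrm{im}\,\Psi_{k+1}$ is precisely the second summand of $\ker\Psi_k$, as claimed.
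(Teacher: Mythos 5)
Your forward inclusion is essentially sound: the Green's-formula argument showing $\delta\omega \perp \mathcal{C}^k(M)$ is correct, and the observation that $\xi = \delta\omega$ is a harmonic, coclosed form with $i^*\!\star \xi = 0$ whose trace is $\psi$ is the right starting point. Note, however, that the paper does not get $\psi \in \ker G_k$ by an algebraic cancellation inside \eqref{eqn:Gdef}; it observes that $d\xi = -\delta d\omega = \pm\star d(\star d\omega)$, so $\xi$ admits a conjugate form and \cite[Theorem~5.1]{BS} applies directly. Your ``the two terms defining $G_k\psi$ should cancel'' is plausible but is left as an unverified computation (and $\Phi^{-1}$ is only defined through $d_\partial\Phi^{-1}$ on $\mathrm{im}\,\Phi$, so the ``finite algebraic computation'' is not purely formal); making it rigorous essentially reproves that direction of \cite[Theorem~5.1]{BS}.

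The genuine gap is in the reverse inclusion. First, the boundary value problem you set up for $\tau = \star\omega$, namely $\Delta\tau = 0$, $i^*\tau = 0$, $i^*\!\star d\tau = \pm\psi$, prescribes the tangential part of $\tau$ together with the normal part of $d\tau$. That is a mixed problem: it is neither the problem with $i^*\!\star\omega$ and $i^*\!\star d\omega$ prescribed that is used in \lemref{lem:kerPhi}, nor its Hodge dual with $i^*\omega$ and $i^*\delta\omega$ prescribed, so \cite[Corollary~3.4.8]{Schwarz} does not hand you a solvability criterion for it. Second, and more importantly, you explicitly defer the crux --- showing that the solvability obstruction, intersected with $i^*\!\left((\mathcal{C}^k(M))^\perp\right)$, coincides with $\ker G_k$ --- to a computation you describe but do not carry out; the condition $\psi \in \ker G_k$ encodes the existence of a conjugate form (equivalently, the harmonicity of the primitive in $\delta\xi$), which is not the same kind of statement as an $L^2$-orthogonality obstruction to a boundary value problem, so real work is hidden there. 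The paper sidesteps all of this: Case 1 of the proof of \lemref{lem:Psikernel} has already identified $\ker G_k \cap i^*\!\left((\mathcal{C}^k(M))^\perp\right)$ with the traces $i^*\delta\zeta$ of forms $\delta\zeta \in c\mathcal{E}^k_N(M)\cap\ker\Delta$ whose primitive $\zeta$ can be chosen harmonic; such a $\zeta$ satisfies $\Delta\zeta = 0$ and $i^*\!\star\zeta = 0$, hence itself solves \eqref{eqn:PhiPsiBVP}, and the reverse inclusion is the one line $\vartheta = i^*\delta\zeta = \Psi(i^*\zeta)$. You invoke \lemref{lem:Psikernel} only as a consistency check at the very end, when it should be the engine of the reverse inclusion.
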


\begin{proof}[Proof of \lemref{lem:Psikernel}]
	Suppose $\varphi \in \Omega^k(\partial M)$ such that $\Psi \varphi = 0$. Then, if $\omega \in \Omega^k(M)$ solves the boundary value problem \eqref{eqn:PhiPsiBVP}, we have that
	\begin{equation}\label{eqn:pfPsikernel1}
		0 = \Psi \varphi = i^*\delta \omega.
	\end{equation}
	Using the Hodge-Morrey decomposition of $\Omega^k(M)$ \cite[Theorem 2.4.2]{Schwarz},
	\begin{equation}\label{eqn:pfPsikernel2}
		\omega = \delta \xi + \kappa + d\zeta \in c\mathcal{E}^k_N(M) \oplus \hk(M) \oplus \mathcal{E}^k_D(M),
	\end{equation}
	where
	\begin{align*}
		c\mathcal{E}_N^k(M) & = \{\omega \in \Omega^k(M) : \omega = \delta \xi \text{ for some } \xi \in \Omega^{k+1}(M) \text{ with } i^*\!\star \xi = 0 \} \\
		\hk(M) & = \{\omega \in \Omega^k(M) : d\omega = 0, \delta \omega = 0 \} \\
		\mathcal{E}_D^k(M) & = \{\omega \in \Omega^k(M) : \omega = d\zeta \text{ for some } \zeta \in \Omega^{k-1}(M) \text{ with } i^*\zeta = 0 \}.
	\end{align*}
	Equations \eqref{eqn:pfPsikernel1} and \eqref{eqn:pfPsikernel2} imply that
	\begin{equation}\label{eqn:deltadzeta}
		0 = i^*\delta \omega = i^*\delta(\delta \xi + \kappa + d\zeta) = i^*\delta d \zeta.
	\end{equation}
	
	Since $\delta d \zeta$ is co-exact and since the space of co-exact $k$-forms is precisely the orthogonal complement of the space of $k$-forms satisfying a Dirichlet boundary condition, \eqref{eqn:deltadzeta} implies that $\delta d \zeta = 0$. Hence, $d\zeta$ is co-closed---but $\mathcal{E}^k_D(M)$ is precisely the orthogonal complement of the space of co-closed $k$-forms, so it follows that $d\zeta = 0$.
	
	Therefore,
	\[
		\omega = \delta \xi + \kappa
	\]
	is co-closed. Since both $\omega$ and $\delta \xi \in c\mathcal{E}^k_N(M)$ satisfy a Neumann boundary condition, $\kappa$ must be a harmonic Neumann field. Moreover, since both $\omega$ and $\kappa$ are harmonic, it follows that $\delta \xi$ is harmonic. Hence,
	\[
		\omega = \delta \xi + \kappa \in (c\mathcal{E}^k_N(M) \cap \ker \Delta) \oplus \hk_N(M)
	\]
	and so
	\begin{equation}\label{eqn:istaromega1}
		\varphi = i^*\omega \in i^*(c\mathcal{E}^k_N(M) \cap \ker \Delta) + i^*\hk_N(M).
	\end{equation}
	Conversely, forms in this space are clearly in the kernel of $\Psi$.
	
	In \eqref{eqn:istaromega1} the sum of spaces is not, \emph{a priori}, direct, but directness of the sum follows immediately from the fact that harmonic forms are uniquely determined by their boundary values \cite[Theorem 3.4.10]{Schwarz}.
	
	The term $i^*\hk_N(M) = \ker \Phi_k$ in \eqref{eqn:istaromega1} is exactly the space described in \eqref{enum:Psikernel1}, so the lemma will follow from showing that $i^*(c\mathcal{E}^k_N(M) \cap \ker \Delta)$ is the direct sum of the spaces described in \eqref{enum:Psikernel2} and \eqref{enum:Psikernel3}.
	
	Suppose, then, that $\varphi \in i^*(c\mathcal{E}^k_N(M) \cap \ker \Delta)$; i.e., that $\omega = \delta \xi$. Since $0 = \Delta \omega = \Delta \delta \xi$, we know that
	\[
		0 = (d\delta + \delta d)\delta \xi = \delta d \delta \xi,
	\]
	so $d\delta \xi$ is co-closed, meaning that $d\delta \xi \in \mathcal{H}^{k+1}(M)$; specifically, $d\delta \xi \in \mathcal{EH}^{k+1}(M)$. On the other hand, for any $d\gamma \in \mathcal{EH}^{k+1}(M)$, there is a unique choice of primitive $\gamma$ that is in $c\mathcal{E}^k_N(M) \cap \ker \Delta$. Hence,
	\[
		c\mathcal{E}^k_N(M) \cap \ker \Delta \simeq \mathcal{EH}^{k+1}(M).
	\]
	In turn, since forms in $c\mathcal{E}^k_N(M) \cap \ker \Delta$ are uniquely determined by their pullbacks to the boundary, this implies that
	\[
		i^*(c\mathcal{E}^k_N(M) \cap \ker \Delta) \simeq \mathcal{EH}^{k+1}(M).
	\]
	
	Applying the Hodge star to the space $c\mathcal{E}^k_N(M) \cap \ker \Delta$ yields Cappell, DeTurck, Gluck, and Miller's space $\mathrm{EHarm}^{n-k}$. Thinking in those terms, $\delta \xi \in c\mathcal{E}^{k}_N(M)$ is a harmonic, co-exact form, but the primitive $\xi$ is not necessarily harmonic. There are two possibilities:
	\begin{description}
		\item[Case 1] If $\xi$ is harmonic, then
		\[
			0 = \Delta \xi = (d\delta + \delta d)\xi = d\delta \xi + \delta d \xi,
		\]
		meaning that $d\delta \xi = -\delta d \xi$ is both exact and co-exact. Since $\Delta \delta \xi = 0$, this means that $\delta \xi$ has a conjugate form (in the sense of \cite[Section 5]{BS}). This implies that $i^*\delta \xi \in \ker G_k$ \cite[Theorem 5.1]{BS}. Since $\delta \xi$ is orthogonal to the space of closed $k$-forms on $M$, we have
		\[
			\varphi = i^*\delta \xi \in \ker G_k \cap i^*\!\left((\mathcal{C}^k(M))^\perp \right),
		\]
		which is the space in \eqref{enum:Psikernel2}.
		
		Conversely, if $\varphi \in \ker G_k \cap \,i^*\!\left((\mathcal{C}^k(M))^\perp \right)$, then $\varphi = i^*\delta \xi$ for some $\delta \xi \in c\mathcal{E}^k_N(M)$ which has a conjugate form. This implies that $d\delta \xi$ is both exact and co-exact, and it is straightforward to check that $\xi$ can be chosen to be harmonic.
		
		\item[Case 2] If $\xi$ is not harmonic, then it belongs to the space
		\[
			\mathcal{N}^k := \{\delta \xi \in c\mathcal{E}^k_N(M) \cap \ker \Delta : \Delta \xi \neq 0\}.
		\]
		This space is isomorphic to $H^{k+1}(M, \partial M; \mathbb{R})$ \cite[Lemma 3]{CDGM}, and so $i^*\mathcal{N}^k$ is the space given in \eqref{enum:Psikernel3}.
	\end{description}
	
	The directness of the sum
	\[
		\left(\ker G_k \cap \,i^*\!\left((\mathcal{C}^k(M))^\perp \right) \right) + i^*\mathcal{N}^k
	\]
	again follows from the fact that harmonic forms are uniquely determined by their boundary values.
\end{proof}

We can now determine the image of $\Psi_{k+1}$.

\begin{proof}[Proof of \lemref{lem:Psiimage}]
	Suppose $\vartheta \in \Omega^k(\partial M)$ such that $\vartheta = \Psi \varphi$ for some $\varphi \in \Omega^{k+1}(\partial M)$. If $\omega \in \Omega^{k+1}(M)$ solves the boundary value problem \eqref{eqn:PhiPsiBVP}, then $\vartheta = \Psi \varphi = i^*\delta \omega$.
	
	Since $\omega$ satisfies a Neumann boundary condition,
	\[
		\delta \omega \in c\mathcal{E}^k_N(M).
	\]
	Moreover, since $\Delta$ commutes with the co-differential,
	\[
		\Delta \delta \omega = \delta \Delta \omega = 0,
	\]
	and so
	\[
		\delta \omega \in c\mathcal{E}^k_N(M) \cap \ker \Delta.
	\]
	Since $\omega$ is itself harmonic, this is precisely the situation described in Case 1 of the proof of \lemref{lem:Psikernel}, so
	\[
		\vartheta = i^*\delta \omega \in \ker G_k \cap \,i^*\!\left((\mathcal{C}^k(M))^\perp \right).
	\]
	
	Conversely, if $\vartheta = i^*\delta \zeta$ for $\delta \zeta \in c\mathcal{E}^k_N(M) \cap \ker \Delta$ with $\zeta$ harmonic, then
	\[
		\Delta \zeta = 0 \quad \text{and} \quad i^*\!\star \zeta = 0,
	\]
	so $\vartheta = i^*\delta \zeta = \Psi i^*\zeta$ is in the image of $\Psi$.
\end{proof}

\begin{corollary}\label{cor:domainrangePhiPsi}
	\[
		\ker \Phi_k \subset \ker \Psi_k \quad \text{and} \quad \mathrm{im}\, \Psi_k \subset \mathrm{im}\, \Phi_{n-k}.
	\]
\end{corollary}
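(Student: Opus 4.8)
The two inclusions live in the expected spaces: $\ker\Phi_k$ and $\ker\Psi_k$ both sit inside $\Omega^k(\partial M)$, while $\mathrm{im}\,\Psi_k$ and $\mathrm{im}\,\Phi_{n-k}$ both sit inside $\Omega^{k-1}(\partial M)$, since $\Phi_{n-k}\colon\Omega^{n-k}(\partial M)\to\Omega^{k-1}(\partial M)$. The first inclusion is essentially free: by \lemref{lem:Psikernel} the space $\ker\Psi_k$ is a direct sum whose first summand is precisely $i^*\mathcal{H}^k_N(M)=\ker\Phi_k$, so $\ker\Phi_k\subset\ker\Psi_k$. (Equivalently, by \lemref{lem:kerPhi} any $\varphi\in\ker\Phi_k$ equals $i^*\omega$ for a harmonic Neumann field $\omega$; then $\omega$ solves \eqref{eqn:PhiPsiBVP} and $\Psi\varphi=i^*\delta\omega=0$ because $\delta\omega=0$.)

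For the second inclusion I would argue directly from the boundary value problems together with the description of $\mathrm{im}\,\Phi$ in \lemref{lem:kerPhi}. Let $\vartheta\in\mathrm{im}\,\Psi_k$, say $\vartheta=\Psi\varphi=i^*\delta\omega$, where $\omega\in\Omega^k(M)$ solves \eqref{eqn:PhiPsiBVP}. Applying \lemref{lem:kerPhi} in degree $n-k$ identifies $\mathrm{im}\,\Phi_{n-k}=(i^*\mathcal{H}^{n-k}_N(M))^\perp$ as the annihilator of the boundary traces of harmonic Neumann fields under the pairing $\int_{\partial M}\cdot\wedge\cdot$, so it suffices to show $\int_{\partial M}\vartheta\wedge\chi=0$ for every $\chi=i^*\mu$ with $\mu\in\mathcal{H}^{n-k}_N(M)$.

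The plan for this verification is Stokes followed by Green. Writing $\int_{\partial M}\vartheta\wedge\chi=\int_{\partial M}i^*(\delta\omega\wedge\mu)=\int_M d(\delta\omega\wedge\mu)$ and expanding the exterior derivative, the $d\mu$ term drops out since $\mu$ is closed, leaving $\int_M d\delta\omega\wedge\mu$; harmonicity of $\omega$ converts $d\delta\omega$ into $-\delta d\omega$. The key observation is that $\nu:=\star\mu$ is a harmonic Dirichlet field, i.e. $\nu\in\mathcal{H}^k_D(M)$: indeed $d\nu=\pm\star\delta\mu=0$, $\delta\nu=\pm\star d\mu=0$, and $i^*\nu=i^*\star\mu=0$ is exactly the Neumann condition on $\mu$. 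Rewriting $\mu=\pm\star\nu$ to pass to an $L^2$ inner product and then invoking Green's formula, $\langle\delta d\omega,\nu\rangle_{L^2}=\langle d\omega,d\nu\rangle_{L^2}-\int_{\partial M}i^*\nu\wedge i^*\!\star d\omega$, both terms on the right vanish because $d\nu=0$ and $i^*\nu=0$. Hence $\int_{\partial M}\vartheta\wedge\chi=0$ and $\vartheta\in\mathrm{im}\,\Phi_{n-k}$. The only real obstacle is the sign bookkeeping when passing between the wedge pairing on $\partial M$ and the $L^2$ inner product on $M$, together with the (harmless) point that one must confirm the single boundary condition $i^*\star\mu=0$ is exactly what kills the Green's-formula boundary term; no analytic input beyond Stokes' theorem, Green's formula, and the two preceding lemmas is needed.
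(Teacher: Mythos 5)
Your proof is correct, and while the first inclusion matches the paper (both read $\ker\Phi_k\subset\ker\Psi_k$ off \lemref{lem:Psikernel}), your argument for the second inclusion takes a genuinely different route. The paper first invokes \lemref{lem:Psiimage} to place $\varphi\in\mathrm{im}\,\Psi_k$ inside $\ker G_{k-1}$, then uses the characterization of $\ker G$ from \cite[Theorem~5.1]{BS} to write $\varphi=i^*\omega$ with $\omega$ a co-closed harmonic form whose differential is $\star d\eta$; the pairing against $i^*\lambda_N$ then dies by one application of Green's formula together with the $L^2$-orthogonality of $\mathcal{H}^{n-k}_N(M)$ to exact forms. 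You instead work directly from the defining boundary value problem \eqref{eqn:PhiPsiBVP}, writing $\vartheta=i^*\delta\omega$ and killing the pairing via Stokes, the identity $d\delta\omega=-\delta d\omega$ for harmonic $\omega$, and the observation that $\star$ carries Neumann fields to Dirichlet fields so that both the interior term $\langle d\omega,d\nu\rangle$ and the boundary term vanish. Your version is more self-contained --- it bypasses \lemref{lem:Psiimage} and the external reference to \cite{BS} entirely --- at the cost of an extra integration by parts and the sign bookkeeping you flag; the paper's version is shorter because it reuses structure already established for the homology computation. Both correctly reduce membership in $\mathrm{im}\,\Phi_{n-k}$ to the annihilator condition of \lemref{lem:kerPhi}, and your degree checks ($\mathrm{im}\,\Psi_k$ and $\mathrm{im}\,\Phi_{n-k}$ both landing in $\Omega^{k-1}(\partial M)$) are right.
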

\begin{proof}
	The fact that $\ker \Phi_k \subset \ker \Psi_k$ is an immediate consequence of \lemref{lem:Psikernel}.
	
	Now, suppose $\varphi \in \mathrm{im}\, \Psi_k$. Then, by \lemref{lem:Psiimage}, $\varphi \in \ker G_{k-1}$, meaning $\varphi = i^*\omega$ for $\omega \in \Omega^{k-1}(M)$ satisfying
	\[
		\Delta \omega = 0, \quad \delta \omega = 0, \quad \text{and} \quad d\omega = \star d\eta
	\]
	for some $\eta \in \Omega^{n-k-1}(M)$ with $\Delta \eta = 0$ and $\delta \eta = 0$ \cite[Theorem~5.1]{BS}. Therefore, for any $\lambda_N \in \mathcal{H}_N^{n-k}(M)$,
	\begin{equation}\label{eqn:imPsi1}
		\int_{\partial M} \varphi \wedge i^*\lambda_N = \pm \int_{\partial M} i^*\omega \wedge i^*(\star \star \lambda_N) = \pm \left[ \langle d\omega, \star \lambda_N \rangle_{L^2(M)} - \langle \omega, \delta \star \lambda_N \rangle_{L^2(M)} \right]
	\end{equation}
	by Green's formula. The second term on the right hand side vanishes since $\lambda_N$ is closed, while the first is equal to
	\begin{equation}\label{eqn:imPsi2}
		\langle \star d\eta, \star \lambda_N \rangle_{L^2(M)} = \langle d\eta, \lambda_N \rangle_{L^2(M)} = 0.
	\end{equation}
	The first equality above is due to the fact that $\star$ is an isometry and the second follows because $\mathcal{H}^{n-k}_N(M)$ is orthogonal to the space of exact forms on $M$.
	
	Putting \eqref{eqn:imPsi1} and \eqref{eqn:imPsi2} together shows that
	\[
		\int_{\partial M} \varphi \wedge i^*\lambda_N = 0
	\]
	for any $\lambda_N \in \mathcal{H}_N^{n-k}(M)$, so \lemref{lem:kerPhi} implies that $\varphi \in \mathrm{im}\, \Phi_{n-k}$, as desired.
\end{proof}

% section homology_psi (end)

\section{Cochain maps and the adjoint of $\Psi$} % (fold)
\label{sec:cochain_maps_and_adjoints}
Since $\Psi$ is a chain map whose homologies are interesting, it seems natural to try to find associated cochain maps and compute their cohomologies. In fact, there are two such maps,
\[
	\til{\Psi} := (-1)^{k(n-1)}\star_\partial \Psi \star_\partial \quad \text{and} \quad \Theta := (-1)^{(k+1)(n-1)} \Phi \Psi \Phi.
\]
By definition both are maps $\Omega^k(\partial M) \to \Omega^{k+1}(\partial M)$.

\subsection{The operator $\til{\Psi}$} % (fold)
\label{sub:the_operator_psi}
The fact that $\til{\Psi}^2 = 0$ is immediate:
\[
	\til{\Psi}^2 = \pm \star_\partial \Psi \star_\partial \star_\partial \Psi \star_\partial = \pm \star_\partial \Psi^2 \star_\partial = 0,
\]
since $\Psi^2 = 0$.

Let $\til{\Psi}^k$ be the restriction of $\til{\Psi}$ to $\Omega^k(\partial M)$. Since $\star_\partial$ is an isomorphism,
\[
	\ker \til{\Psi}^k \simeq \ker \Psi_{n-k-1} \quad \text{and} \quad \mathrm{im}\, \til{\Psi}^{k-1} \simeq \mathrm{im}\, \Psi_{n-k},
\]
and so
\begin{equation}\label{eqn:Psiduality}
	H^k(\Omega^*(\partial M), \til{\Psi}) \simeq H_{n-k-1}(\Omega^*(\partial M), \Psi).
\end{equation}
Thus, we can use \thmref{thm:Psihomology} to determine the cohomology groups of $\til{\Psi}$.

\begin{proposition}\label{prop:tilPsihomology}
	The cohomology groups of the cochain complex $(\Omega^*(\partial M), \til{\Psi})$ are
	\[
		H^k(\Omega^*(\partial M), \til{\Psi}) \simeq H^{n-k}(M; \mathbb{R}) \oplus H^{n-k-1}(M, \partial M; \mathbb{R})
	\]
\end{proposition}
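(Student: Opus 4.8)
The plan is to read off the cohomology of $\til\Psi$ from results already in hand, with no new boundary value problems to solve: the computation is a two-step duality, first the Hodge duality on $\partial M$ supplied by $\star_\partial$ and then Poincar\'e--Lefschetz duality on $M$.

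First I would simply invoke the isomorphism \eqref{eqn:Psiduality},
\[
	H^k(\Omega^*(\partial M),\til\Psi)\simeq H_{n-k-1}(\Omega^*(\partial M),\Psi),
\]
which was obtained above from the identifications $\ker\til\Psi^k\simeq\ker\Psi_{n-k-1}$ and $\mathrm{im}\,\til\Psi^{k-1}\simeq\mathrm{im}\,\Psi_{n-k}$ induced by the isomorphism $\star_\partial$. This already reduces the problem to the homology of the chain complex $(\Omega^*(\partial M),\Psi)$, which is exactly the content of \thmref{thm:Psihomology}.

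Next I would substitute \thmref{thm:Psihomology}, evaluated in degree $n-k-1$, into the right-hand side. This expresses $H^k(\Omega^*(\partial M),\til\Psi)$ through one relative and one absolute cohomology group of $M$, concentrated in adjacent degrees. The remaining step is to rewrite this expression in the stated form using Poincar\'e--Lefschetz duality on the compact oriented manifold $M$: over the field $\mathbb{R}$ one has $H^p(M,\partial M;\mathbb{R})\simeq H^{n-p}(M;\mathbb{R})$ and, symmetrically, $H^p(M;\mathbb{R})\simeq H^{n-p}(M,\partial M;\mathbb{R})$, since homology and cohomology with real coefficients have equal dimension in each degree.

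I expect the only real obstacle to be bookkeeping rather than mathematics: the argument threads the degree through two separate reflections, the shift $k\mapsto n-k-1$ coming from \eqref{eqn:Psiduality} and the reflection $p\mapsto n-p$ coming from Poincar\'e--Lefschetz duality. A single off-by-one in either reflection interchanges the absolute and relative summands or shifts their degrees, so the crux is to pin down these two index maps precisely and to verify the final identification against a simple test case---for instance $M$ a disk, where $\partial M=S^1$ and both the $\til\Psi$-cohomology and the right-hand side can be computed by hand---before committing to the labeling.
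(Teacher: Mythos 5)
Your route is exactly the paper's: the entire proof consists of substituting \thmref{thm:Psihomology} in degree $n-k-1$ into \eqref{eqn:Psiduality}. But if you carry the substitution out, the final Poincar\'e--Lefschetz step you propose cannot land on the statement as printed. Substitution gives
\[
	H^k(\Omega^*(\partial M), \til{\Psi}) \simeq H_{n-k-1}(\Omega^*(\partial M), \Psi) \simeq H^{n-k}(M, \partial M; \mathbb{R}) \oplus H^{n-k-1}(M; \mathbb{R}),
\]
with the \emph{relative} group in degree $n-k$ and the \emph{absolute} group in degree $n-k-1$, whereas the proposition asserts the transpose. Poincar\'e--Lefschetz duality does not interchange these two expressions: applied to the substituted answer it yields $H^k(M;\mathbb{R}) \oplus H^{k+1}(M,\partial M;\mathbb{R})$, i.e.\ $H^k(\Omega^*(\partial M),\til{\Psi}) \simeq H_k(\Omega^*(\partial M),\Psi)$, which is consistent with \propref{prop:Thetacohomology} and the remark that $\Theta$ and $\til{\Psi}$ have the same cohomology; applied to the printed statement it yields the genuinely different group $H^k(M,\partial M;\mathbb{R}) \oplus H^{k+1}(M;\mathbb{R})$.

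The disk test you wisely propose settles the bookkeeping. For $M = D^2$, $n=2$, $k=0$: since $\Omega^2(S^1) = 0$ one has $H^0(\Omega^*(S^1), \til{\Psi}) \simeq H_1(\Omega^*(S^1), \Psi) \simeq H^2(D^2, S^1; \mathbb{R}) \oplus H^1(D^2;\mathbb{R}) \simeq \mathbb{R}$, whereas the printed formula gives $H^2(D^2;\mathbb{R}) \oplus H^1(D^2, S^1;\mathbb{R}) = 0$. So the fix is to drop the duality step entirely, record the answer as $H^{n-k}(M,\partial M;\mathbb{R}) \oplus H^{n-k-1}(M;\mathbb{R})$ (equivalently $H^k(M;\mathbb{R}) \oplus H^{k+1}(M,\partial M;\mathbb{R})$), and note that the proposition as printed appears to have the absolute and relative summands interchanged.
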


The obvious guess, suggested by experience with $\Lambda$ and by the duality given in \eqref{eqn:Psiduality}, is that $\til{\Psi}$ is the adjoint of $\Psi$.

\begin{proposition}\label{lem:Psiadjoint}
	$\til{\Psi}$ is the adjoint of $\Psi$.
\end{proposition}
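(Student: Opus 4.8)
The plan is to reduce the adjointness statement to a symmetric wedge-pairing identity for $\Psi$ and then to establish that identity by harmonic extension and Green's formula. Recall that the $L^2$ inner product on $\Omega^j(\partial M)$ is $\langle\alpha,\beta\rangle_\partial = \int_{\partial M}\alpha\wedge\star_\partial\beta$. For $\varphi\in\Omega^k(\partial M)$ and $\chi\in\Omega^{k-1}(\partial M)$, writing out $\langle\Psi\varphi,\chi\rangle_\partial$ and $\langle\varphi,\til\Psi\chi\rangle_\partial$ and substituting the definition $\til\Psi=(-1)^{(k-1)(n-1)}\star_\partial\Psi\star_\partial$ on $\Omega^{k-1}(\partial M)$ (together with the sign rule $\star_\partial\star_\partial=(-1)^{j(n-1-j)}$ on $\Omega^j(\partial M)$) turns the claim $\langle\Psi\varphi,\chi\rangle_\partial=\langle\varphi,\til\Psi\chi\rangle_\partial$ into an identity of the form
\[
	\int_{\partial M}\Psi\varphi\wedge\chi' = \pm\int_{\partial M}\varphi\wedge\Psi\chi',
\]
where $\chi'=\star_\partial\chi\in\Omega^{n-k}(\partial M)$. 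Both integrands are top-degree forms on $\partial M$, and the identity exhibits $\Psi$ as self-adjoint with respect to the wedge pairing $\Omega^k(\partial M)\times\Omega^{n-k}(\partial M)\to\R$.

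To prove this wedge identity, let $\omega\in\Omega^k(M)$ and $\tau\in\Omega^{n-k}(M)$ be the harmonic extensions of $\varphi$ and $\chi'$ solving the boundary value problem \eqref{eqn:PhiPsiBVP}, so that $\Delta\omega=\Delta\tau=0$, $i^*\!\star\omega=i^*\!\star\tau=0$, and $\Psi\varphi=i^*\delta\omega$, $\Psi\chi'=i^*\delta\tau$. The identity then becomes $\int_{\partial M}i^*\delta\omega\wedge i^*\tau=\pm\int_{\partial M}i^*\delta\tau\wedge i^*\omega$. I would apply Green's formula to the left-hand side with the arguments $\alpha=\delta\omega$ and $\beta=\star\tau$, which produces $i^*\!\star\beta=\pm i^*\tau$ on the boundary and thus recovers $\int_{\partial M}i^*\delta\omega\wedge i^*\tau$, expressing it as $\langle d\delta\omega,\star\tau\rangle_{L^2(M)}-\langle\delta\omega,\delta\star\tau\rangle_{L^2(M)}$.

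Next I would simplify this interior expression into a form that is manifestly symmetric in $\omega$ and $\tau$. Using harmonicity to replace $d\delta\omega=-\delta d\omega$ and integrating by parts once more, the boundary terms that arise carry a factor of $i^*\!\star\omega$ or $i^*\!\star\tau$ and hence vanish by the boundary conditions; the remaining interior terms can be rewritten, via $\delta=\pm\star d\star$ and the fact that $\star$ is an $L^2$-isometry, as $\pm\bigl(\langle\delta\omega,\star d\tau\rangle_{L^2(M)}+\langle\delta\tau,\star d\omega\rangle_{L^2(M)}\bigr)$. Since this expression is symmetric under $\omega\leftrightarrow\tau$, applying the same computation to $\int_{\partial M}i^*\delta\tau\wedge i^*\omega$ yields the identical interior integral, which establishes the wedge identity and hence the adjointness.

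The conceptual crux is the observation that both boundary integrals collapse to the same symmetric interior bilinear form; once that is in hand, the statement follows. The main obstacle in practice is purely the sign bookkeeping: tracking the powers of $-1$ coming from $\star\star$ on forms of various degrees, from $\delta=\pm\star d\star$, and from commuting the wedge product $\varphi\wedge\Psi\chi'=\pm\Psi\chi'\wedge\varphi$, and then checking that the normalizing constant built into the definition of $\til\Psi$ is precisely the sign needed to make $\langle\Psi\varphi,\chi\rangle_\partial=\langle\varphi,\til\Psi\chi\rangle_\partial$ hold exactly rather than merely up to sign. These signs must be reconciled with the orientation conventions used in Green's formula, but they involve no further ideas.
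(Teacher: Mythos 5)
Your proposal is correct and follows essentially the same route as the paper: reduce adjointness to the wedge-pairing symmetry $\int_{\partial M}\Psi\varphi\wedge\chi'=\pm\int_{\partial M}\varphi\wedge\Psi\chi'$ (the paper's equation \eqref{eqn:keyadjoint}), then establish it via harmonic extensions, two applications of Green's formula with the boundary terms killed by $i^*\!\star\omega=i^*\!\star\tau=0$, and the resulting symmetric Dirichlet-type interior bilinear form. The only work you defer --- the explicit sign bookkeeping confirming that the prefactor $(-1)^{k(n-1)}$ in $\til{\Psi}$ makes the adjoint identity exact --- is routine and is carried out in full in the paper's proof.
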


\begin{proof}
	The proof follows along similar lines to the proof that $\Lambda^* = \star_\partial \Lambda \star_\partial$ \cite[p. 132]{BS}.
	
	Let $\varphi \in \Omega^k(\partial M)$ and $\psi \in \Omega^{n-k}(\partial M)$. Suppose $\omega \in \Omega^k(M)$ solves the boundary value problem \eqref{eqn:PhiPsiBVP} and that $\eta \in \Omega^{n-k}(M)$ solves the equivalent boundary value problem for $\psi$.
	
	The key step is to show that
	\begin{equation}\label{eqn:keyadjoint}
		(-1)^{k+1}\int_{\partial M} \varphi \wedge \Psi \psi = (-1)^{kn+n+1} \int_{\partial M} \psi \wedge \Psi \varphi. 
	\end{equation}
	Provided this is true, we can re-write the above equation as
	\[
		(-1)^{kn+k+1} \langle \varphi, \star_\partial \Psi \psi \rangle_{L^2(\partial M)} = -\langle \psi, \star_\partial \Psi \varphi \rangle_{L^2(\partial M)}
	\]
	or, equivalently,
	\[
		\langle \varphi, \star_\partial \Psi \psi \rangle_{L^2(\partial M)} = (-1)^{k(n-1)} \langle \psi , \star_\partial \Psi \varphi \rangle_{L^2(M)}.
	\]
	Letting $\psi = \star_\partial \psi'$, this becomes
	\[
		\langle \psi, \star_\partial \Psi \star_\partial \psi' \rangle_{L^2(\partial M)} = (-1)^{k(n-1)} \langle \star_\partial \psi', \star_\partial \Psi \varphi \rangle_{L^2(\partial M)} = (-1)^{k(n-1)} \langle \psi', \Psi \varphi \rangle_{L^2(\partial M)},
	\]
	since $\star_\partial$ is an isometry. Therefore,
	\[
		\Psi^* = (-1)^{k(n-1)} \star_\partial \Psi \star_\partial = \til{\Psi},
	\]
	as desired.
	
	To prove \eqref{eqn:keyadjoint} we note that, by Green's formula,
	\begin{align}
		\nonumber \int_{\partial M} \varphi \wedge \Psi \psi = \int_{\partial M} i^*\omega \wedge i^*\delta \eta & = (-1)^{n(k+1)+n+1} \int_{\partial M} i^*\omega \wedge i^*(\star d \star \eta) \\
		\label{eqn:green} & = (-1)^{kn + 1} \left(\langle d\omega, d\star\eta\rangle_{L^2(M)} - \langle \omega , \delta d \star \eta \rangle_{L^2(M)} \right).
	\end{align}
	Notice that
	\[
		-\langle \omega, \delta d \star \eta \rangle_{L^2(M)} = \langle \omega , d\delta \star \eta \rangle_{L^2(M)}
	\]
	since $0 = \star \Delta \eta = \Delta \star \eta = d\delta \star \eta + \delta d \star \eta$. In turn,
	\[
		\langle \delta \omega , \delta \star \eta \rangle_{L^2(M)} = \langle \omega , d \delta \star \eta\rangle_{L^2(M)} - \int_{\partial M} i^*\delta \star \eta \wedge i^*\!\star \omega.
	\]
	Since $i^*\!\star \omega = 0$, the second term on the right hand side vanishes. Therefore, we can re-write \eqref{eqn:green} as
	\begin{equation}\label{eqn:green2}
		\int_{\partial M} \varphi \wedge \Psi \psi = (-1)^{kn+1} \left(\langle d\omega, d \star \eta \rangle_{L^2(M)} + \langle \delta \omega , \delta \star \eta \rangle_{L^2(M)} \right).
	\end{equation}

Completely analogous reasoning yields the expression
\begin{equation}\label{eqn:green3}
	\int_{\partial M} \psi \wedge \Psi \varphi = (-1)^{kn+n+1} \left(\langle d\eta, d\star\omega \rangle_{L^2(M)} + \langle \delta \eta, \delta \star \omega \rangle_{L^2(M)}\right)
\end{equation}
Therefore, \eqref{eqn:keyadjoint}  follows from \eqref{eqn:green2} and \eqref{eqn:green3} because
\begin{align*}
	\langle d\omega, d\star \eta \rangle_{L^2(M)} & = \langle \star d\omega, \star d \star \eta \langle_{L^2(M)} = (-1)^{k(n+1)} \langle \delta \star \omega, \delta \eta \rangle_{L^2(M)} \\
	\langle \delta \omega, \delta \star \eta \rangle_{L^2(M)} & = \langle \star \delta \omega, \star \delta \star \eta \rangle_{L^2(M)} =  (-1)^{k(n+1)} \langle d \star \omega, d \eta \rangle_{L^2(M)}
\end{align*}
(the first equality in each line is due to the fact that $\star$ is an isometry).
\end{proof}

% subsection the_operator_psi (end)

\subsection{The operator $\Theta$} % (fold)
\label{sub:the_operator_theta}
The are several different equivalent ways of expressing the operator $\Theta = (-1)^{(k+1)(n+1)}\Phi \Psi \Phi$. Using \eqref{eqn:PhiPsi},
\begin{equation}\label{eqn:Thetaalt1}
	\Theta = (-1)^{(k+1)(n+1)} \Phi \Psi \Phi = (-1)^{kn}d_\partial \Phi^2.
\end{equation}
On the other hand, using \eqref{eqn:PsiPhi},
\begin{equation}\label{eqn:Thetaalt2}
	\Theta = (-1)^{(k+1)(n+1)} \Phi \Psi \Phi = (-1)^{n(k+1)}\Phi^2 d_\partial.
\end{equation}
Finally, combining \eqref{eqn:Phi2} with \eqref{eqn:Thetaalt2} yields
\begin{equation}\label{eqn:Thetaalt3}
	\Theta = (-1)^{n(k+1)}\Phi^2 d_\partial = (d_\partial \Psi + \Psi d_\partial) d_\partial = d_\partial \Psi d_\partial.
\end{equation}

This last expression makes it clear that $\Theta$ is a cochain map:
\[
	\Theta^2 = d_\partial \Psi d_\partial d_\partial \Psi d_\partial = 0.
\]

\begin{proposition}\label{prop:Thetacohomology}
	The cohomology of the cochain complex $(\Omega^*(\partial M), \Theta)$ is given, up to isomorphism, by
	\[
		H^k(\Omega^*(\partial M), \Theta) \simeq H^{k+1}(M, \partial M; \mathbb{R}) \oplus H^k(M; \mathbb{R}).
	\]
\end{proposition}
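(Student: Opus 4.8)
The plan is to compute $\ker\Theta_k$ and $\mathrm{im}\,\Theta_{k-1}$ explicitly and read off the quotient, exploiting the three equivalent descriptions \eqref{eqn:Thetaalt1}--\eqref{eqn:Thetaalt3}: on $\Omega^k(\partial M)$ we have $\Theta = (-1)^{kn}d_\partial\Phi^2 = (-1)^{n(k+1)}\Phi^2 d_\partial = d_\partial\Psi d_\partial$. Since the target groups are exactly the homology groups $H_k(\Omega^*(\partial M),\Psi)$ computed in \thmref{thm:Psihomology}, it suffices to establish an isomorphism $H^k(\Omega^*(\partial M),\Theta)\simeq H_k(\Omega^*(\partial M),\Psi)$; everything then follows from the earlier theorem.

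Write $Z^k$ and $B^k$ for the closed and exact $k$-forms on $\partial M$. The two factored forms of $\Theta$ give a clean structural reduction. From $\Theta = \pm\Phi^2 d_\partial$ we get $\Theta|_{Z^k}=0$, so $B^k\subseteq Z^k\subseteq\ker\Theta_k$; from $\Theta=\pm d_\partial\Phi^2$ together with the commutation $d_\partial\Phi^2 = (-1)^n\Phi^2 d_\partial$ we get $\mathrm{im}\,\Theta_{k-1}=\Phi^2(B^k)\subseteq B^k$. Relation \eqref{eqn:Phi2} shows that $\Phi^2$ carries $Z^k$ into $B^k$, hence preserves exact forms, so these inclusions are well defined. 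We therefore obtain the filtration
\[
	\mathrm{im}\,\Theta_{k-1}\ \subseteq\ B^k\ \subseteq\ Z^k\ \subseteq\ \ker\Theta_k,
\]
and, passing to the associated graded (over $\mathbb{R}$),
\[
	H^k(\Omega^*(\partial M),\Theta)\ \simeq\ \frac{B^k}{\Phi^2 B^k}\ \oplus\ H^k(\partial M)\ \oplus\ \ker\!\left(\Phi^2|_{B^{k+1}}\right),
\]
where the last summand comes from using $d_\partial$ to identify $\ker\Theta_k/Z^k$ with $B^{k+1}\cap\ker\Phi^2$.

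It remains to identify the two outer summands and to reconcile the apparent de Rham factor $H^k(\partial M)$ with the target $H^{k+1}(M,\partial M;\mathbb{R})\oplus H^k(M;\mathbb{R})$. For this I would return to the boundary value problem \eqref{eqn:PhiPsiBVP}: membership in $\ker\Theta_k$ and in $\mathrm{im}\,\Theta_{k-1}=\Phi^2(B^k)$ each translates into solvability of a harmonic boundary value problem on $M$, and decomposing the solution by Hodge--Morrey--Friedrichs exactly as in \eqref{eqn:pfPsikernel2} sorts the contributions into pullbacks of harmonic Neumann fields ($\ker\Phi_k\simeq H^k(M;\mathbb{R})$ by \thmref{thm:PhiBettinumbers}) and the Cappell--DeTurck--Gluck--Miller space $\mathcal{N}^k\simeq H^{k+1}(M,\partial M;\mathbb{R})$ already used in \lemref{lem:Psikernel}. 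The main obstacle is precisely this bookkeeping: because $\Theta$ sandwiches $\Psi$ between two copies of $d_\partial$ (the extra factor $\Phi^2$), both $\ker\Theta_k$ and $\mathrm{im}\,\Theta_{k-1}$ are strictly larger than $\ker\Psi_k$ and $\mathrm{im}\,\Psi_{k+1}$, and I must show that the kernel and cokernel of $\Phi^2$ on exact forms recombine with $H^k(\partial M)$, through the long exact sequence of the pair $(M,\partial M)$, to leave exactly $H^{k+1}(M,\partial M;\mathbb{R})\oplus H^k(M;\mathbb{R})$. As a consistency check, the case $k=0$ has $\mathrm{im}\,\Theta_{-1}=0$, so $H^0(\Omega^*(\partial M),\Theta)=\ker(d_\partial\Phi^2)$, and the splitting should reproduce Corollary~\ref{cor:n-1cohomologyPhi}, with the surviving $H^1(M,\partial M;\mathbb{R})$ matching $H^{n-1}(M;\mathbb{R})$ under Poincar\'e--Lefschetz duality.
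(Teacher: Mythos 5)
The paper itself omits the proof of Proposition~\ref{prop:Thetacohomology} (describing it as ``somewhat long and technical''), so there is no printed argument to compare against; your proposal must therefore stand on its own, and as written it does not. The part you do carry out is correct and is a sensible reduction: since $\Theta = \pm\Phi^2 d_\partial = \pm d_\partial \Phi^2$ and \eqref{eqn:Phi2} gives $\Phi^2 = \pm(d_\partial\Psi + \Psi d_\partial)$, the filtration $\Phi^2(B^k) \subseteq B^k \subseteq Z^k \subseteq \ker\Theta_k$ holds, and over $\mathbb{R}$ you may pass to the associated graded to get
\[
	H^k(\Omega^*(\partial M),\Theta) \;\simeq\; \bigl(B^k/\Phi^2 B^k\bigr)\,\oplus\, H^k(\partial M;\mathbb{R})\,\oplus\,\bigl(B^{k+1}\cap\ker\Phi^2\bigr).
\]
But this is where the proof stops, and everything of substance lies beyond this point. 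You explicitly defer the identification of the two outer summands and their recombination with $H^k(\partial M;\mathbb{R})$ (``I would return to\dots'', ``I must show that\dots''); a proof that announces what must be shown without showing it has not proved the proposition.

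The deferred step is not mere bookkeeping. By the long exact sequence of the pair, $\dim H^k(\partial M;\mathbb{R}) = \dim H^k(M;\mathbb{R}) + \dim H^{k+1}(M,\partial M;\mathbb{R}) - \dim\ker\bigl(H^k(M)\to H^k(\partial M)\bigr) - \dim\mathrm{coker}\bigl(H^k(\partial M)\to H^{k+1}(M,\partial M)\bigr)$, so the boundary cohomology summand generically falls short of the claimed answer by exactly the dimensions of the ``interior'' subspaces of $H^k(M;\mathbb{R})$ and $H^{k+1}(M,\partial M;\mathbb{R})$. Your argument therefore requires proving that $B^k/\Phi^2 B^k$ and $B^{k+1}\cap\ker\Phi^2$ together contribute precisely these interior pieces --- which in turn demands an analysis of $\ker\Phi^2$ and of the image of $\Phi^2$ on exact boundary forms via the boundary value problem \eqref{eqn:PhiPsiBVP}, the Hodge--Morrey--Friedrichs decomposition, and the spaces $\mathcal{N}^k$ of \lemref{lem:Psikernel} (for instance, one can check that $\ker\Phi^2_k/\ker\Phi_k \simeq i^*\mathcal{H}^{n-k-1}_N(M)$ because $i^*\mathcal{H}^{n-k-1}_N(M)$ lies inside $(i^*\mathcal{H}^k_N(M))^\perp = \mathrm{im}\,\Phi_k$, but one must then intersect with $B^{k+1}$ and match the result against the exact sequence). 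You also need an argument that $\Phi^2|_{B^k}$ has finite-dimensional cokernel before $B^k/\Phi^2 B^k$ can even be compared with a cohomology group. Until those computations are actually performed, the proposal is a plan, not a proof.
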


Notice that $(\Omega^*(\partial M), \Theta)$ has the same cohomology as $(\Omega^*(\partial M), \til{\Psi})$.

We omit the proof of \propref{prop:Thetacohomology}, which is somewhat long and technical, though not particularly difficult. Two perhaps surprising consequences are:
\begin{enumerate}
	\item Since $\Theta$ has the same cohomology as $\til{\Psi}$, the homology of $\Psi$ can be completely recovered from that of $\Theta$. However, by \eqref{eqn:Thetaalt3}, $\Theta = d_\partial \Psi d_\partial$, so pre- and post-composing $\Psi$ by $d_\partial$ does not change the (co)homology.
	
	\item By \eqref{eqn:Thetaalt1} and \eqref{eqn:Thetaalt2},
	\[
		\Theta = \pm d_\partial \Phi^2 = \pm \Phi^2 d_\partial.
	\]
	Hence, the homology of $\Psi$ is completely determined by the operator $\Phi$, and the results of Corollaries~\ref{cor:echo} and \ref{cor:surfacecohomology} depend only on $\Phi$. In that spirit, the following is a restatement of the $k=0$ case of Corollary~\ref{cor:echo}.
\end{enumerate}

\begin{n-1cohomologyPhi}
	A copy of the cohomology group $H^{n-1}(M; \mathbb{R})$ is distinguished by the operator $\Phi$ inside $\Omega^0(\partial M)$, the space of smooth functions on $\partial M$. Specifically,
	\[
		\ker(d_\partial \Phi^2)/\ker \Phi \simeq H^{n-1}(M; \mathbb{R}).
	\]
\end{n-1cohomologyPhi}

% subsection the_operator_theta (end)

% section cochain_maps_and_adjoints (end)

\end{document}